\renewcommand*{\backrefalt}[4]{%
\ifcase #1 %
No citations%
\or
\ding{43}~p.~#2%
\else
\ding{43}~pp.~#2%
\fi}
\theoremstyle{plain}
\newtheorem{theorem}{Theorem}[section]%  meant for continuous numbers
\newenvironment{customthm}[1]
  {\innercustomthm}
  {\endinnercustomthm}
\newtheorem{proposition}[theorem]{Proposition}% 
\newtheorem{corollary}[theorem]{Corollary}
\newtheorem{lemma}[theorem]{Lemma}
\newtheorem{korollar}[theorem]{Corollary}
\theoremstyle{remark}
\newtheorem{remark}[theorem]{Remark}%
\theoremstyle{definition}
\newtheorem{example}[theorem]{Example}%
\numberwithin{equation}{section}
\newcommand{\R}{\mathbb{R}}
\renewcommand\footnotemark{}
\title{Behavior of Absorbing and Generating $p$-Robin Eigenvalues in Bounded and Exterior Domains\thanks{\textbf{Keywords}: $p$-Laplacian, Nonlinear eigenvalue problems, Robin Boundary Condition, Exterior Domain, Shape Optimization, Superconductivity
}} %% Article title
\author{Lukas Bundrock\thanks{\textbf{LB}: The University of Alabama, USA, lbundrock@ua.edu}%\cortext[cor1]{Corresponding author}
\and
Tiziana Giorgi\thanks{\textbf{TG}: The University of Alabama, USA, tgiorgi@ua.edu}
\and
Robert Smits
\thanks{\textbf{RS}: The University of Alabama, USA, rgsmits@ua.edu}
}
\begin{document}
\maketitle

% Abstract
\begin{abstract}
We establish rigorous quantitative inequalities for the first eigenvalue of the generalized $p$-Robin problem, for both the classical diffusion absorption case, where the Robin boundary parameter $\alpha$ is positive, and the superconducting {generation regime} (\(\alpha<0\)), where the boundary acts as a source. In bounded domains, we use a unified approach to derive a precise asymptotic behavior for all $p$ and all small real $\alpha$, improving existing results in various directions, including requiring weaker boundary regularity for the case of the classical 2-Robin problem, studied in the fundamental work by Ren{\'e} Sperb. In exterior domains, we characterize the existence of eigenvalues, establish general inequalities and asymptotics as \(\alpha\to 0\) for the first eigenvalue of the exterior of a ball, and obtain some sharp geometric inequalities for convex domains in two dimensions.
\end{abstract}

%%Graphical abstract
%\begin{graphicalabstract}
%%\includegraphics{grabs}
%\end{graphicalabstract}

%%Research highlights
%\begin{highlights}
%\item Research highlight 1
%\item Research highlight 2
%\end{highlights}

% Keywords
%\begin{keyword}
%% keywords here, in the form: keyword \sep keyword
%Laplacian \sep Eigenvalue \sep Steklov Problem \sep Exterior Domain \sep Shape Optimization
%% PACS codes here, in the form: \PACS code \sep code

%% MSC codes here, in the form: \MSC code \sep code
%% or \MSC[2008] code \sep code (2000 is the default)

%\end{keyword}

%\end{frontmatter}

%% Add \usepackage{lineno} before \begin{document} and uncomment 
%% following line to enable line numbers
%% \linenumbers

%% main text
%%

%% Use \section commands to start a section
\tableofcontents

\section{Introduction}
The generalized Robin problem for the $p$-Laplace operator, $p$-Robin problem, along with its principal eigenvalue, emerges as a tool in diverse physical contexts involving both diffusive-absorptive and source phenomena. This eigenvalue, when it exists, plays a role in numerous and significant applications. For example, in bounded domains, the principal eigenvalue is pivotal in determining the onset temperature of superconductivity in zero field conditions (see \cite{giorgi2007eigenvalue} and \cite{kachmar2025laplace}). Here, the domain boundary acts as a source, and the governing operator coincides precisely with the Laplacian—a fundamental mathematical operator central to modeling heat transfer, fluid dynamics, and wave propagation. The $p$-Laplacian is also used to describe the behavior of complex fluids. Specifically, shear-thickening non-Newtonian fluids are captured by the $p$-Laplacian with $p \geq 2$, while shear-thinning fluids require the $p$-Laplacian with $p \in (1,2)$, \cite{astarita1974principles}. Thus, understanding the $p$-Robin problem and optimizing its eigenvalue properties has implications across both theoretical and applied sciences, motivating further research into shape optimization and boundary condition effects.

In this paper, we consider the $p$-Robin problem in bounded and exterior domains and derive results that fill in some existing gaps in the behavior of the principal eigenvalue. For a Lipschitz domain $\mathcal{U} \subset \mathbb{R}^n$ with $n \geq 2$, $\alpha \in \mathbb{R}$, and  $p \in (1, \infty)$, we consider the eigenvalue problem
\begin{align}\label{eq:PDEint}
\begin{cases}
-\Delta_p u  = \lambda |u|^{p-2} u \, &\text{ in } \mathcal{U}, \\
| \nabla u|^{p-2} \partial_{\nu} u + \alpha |u|^{p-2} u = 0 \, &\text{ on } \partial \mathcal{U},
\end{cases}
\end{align}
where $\Delta_p u := \operatorname{div} \left( | \nabla u|^{p-2} \nabla u \right)$ is the $p$-Laplacian and ${\nu}$ denotes the outer unit normal to $\partial \mathcal{U}$. We understand this problem in the weak sense, meaning $\lambda \in \mathbb{R}$ is called an eigenvalue of \eqref{eq:PDEint}, if there exists a non-zero function $ u \in W^{1,p}(\mathcal{U})$ such that for all $\phi \in W^{1,p}(\mathcal{U})$ it holds
\begin{align}\label{eq:PDEweak}
\int_{\mathcal{U}} | \nabla u |^{p-2} \langle \nabla u , \nabla \phi \rangle \, \mathrm{d}x + \alpha \int_{\partial \mathcal{U}} |u|^{p-2} u \phi \, \mathrm{d}S= \lambda \int_{\mathcal{U}} |u|^{p-2} u \phi \, \mathrm{d}x. 
\end{align}

We are interested in the existence and properties of the first eigenvalue $\lambda_1(\alpha, p,n, \mathcal{U})$ of \eqref{eq:PDEint}, and in when $\lambda_1$ can be characterized variationally as
\begin{align}\label{eq:lambda1int}
\lambda_1(\alpha, p,n, \mathcal{U}) := \inf_{u \in W^{1,p}(\mathcal{U})} \frac{\int_{\mathcal{U}} | \nabla u|^{p} \, \mathrm{d}x + \alpha \int_{\partial \mathcal{U}} |u|^p \, \mathrm{d}S}{\int_{\mathcal{U} } |u|^p \, \mathrm{d}x}.
\end{align}
{In here, by first (or principal) we mean the smallest element of the discrete spectrum if not empty.}

In Section~\ref{sec:bounded}, we consider the case when $\mathcal{U}\equiv\Omega$, where $\Omega$ is a bounded domain. In this setting, several fundamental properties of the $p$-Laplacian have been established by  Lindqvist,  \cite{lindqvist1990equation, lindqvist2017notes}, Anello,  \cite{anello2009dirichlet,anello2013asymptotic} and Lee, \cite{le2006eigenvalue}. It is shown that the first eigenvalue of \eqref{eq:PDEint} can be indeed characterized variationally as in (\ref{eq:lambda1int}),
%\begin{align}\label{eq:lambda1int}
%\lambda_1(\alpha, p,n, \Omega) := \inf_{u \in W^{1,p}(\Omega)} \frac{\int_{\Omega} | \nabla u|^{p} \, \mathrm{d}x + \alpha \int_{\partial \Omega} |u|^p \, \mathrm{d}S}{\int_{\Omega } |u|^p \, \mathrm{d}x},
%\end{align}
where the eigenfunctions corresponding to $\lambda_1(\alpha, p,n , \Omega)$ minimize \eqref{eq:lambda1int}, and that it is isolated and simple and has an  eigenfunction of constant sign.

We focus our attention on the asymptotic behavior of  $\lambda_1(\alpha, p,n , \Omega)$ with respect to $\alpha$ and derive quantitative inequalities via a simple, unified approach that can be used for all values of $\alpha$, which is uncommon as the absorbing and the generating cases usually require different strategies of proof. 

For bounded domains, the situation when $\alpha >0$ is well understood. As $\alpha \to \infty$, the first Robin eigenvalue $\lambda_1(\alpha, p,n , \Omega)$ converges to the first Dirichlet eigenvalue of $-\Delta_p$. On the other hand, when $\alpha \searrow 0$ the eigenvalue $\lambda_1(\alpha, p,n , \Omega)$ vanishes and $\frac{\lambda_1(\alpha, p,n , \Omega)}{\alpha}$
tends to $\frac{|\partial \Omega|}{|\Omega|}$. Quantitative inequalities exhibiting  the correct  asymptotic behavior were established by Sperb for $p=2$, \cite{sperb1972untere}, and these results have been recently extended to any $1<p< \infty$ by Barbato and Della Pietra, \cite{barbato2024upper}.

For the superconducting regime, Kova{\v{r}}{\'\i}k and Pankrashkin describe the asymptotic behavior as $\alpha \to - \infty$, depending on the curvature of $\partial \Omega$, \cite{kovavrik2017p}. In fact, in their work $\Omega$ is allowed to be unbounded, as long as is boundary is compact or behaves suitably at infinity. When $\alpha \nearrow 0$ and $\Omega$ is bounded, it still holds $\lambda_1(\alpha, p,n , \Omega) \to 0$ and $\frac{\lambda_1(\alpha, p,n , \Omega)}{\alpha} \to \frac{|\partial \Omega|}{|\Omega|}$, \cite{barbato2024upper}. For $p=2$, corresponding quantitative inequalities are shown in \cite{giorgi2007eigenvalue}, but quantitative inequalities were unknown for $p \neq 2$. In Theorem~\ref{theo:lowbound1}, by adapting the approach from \cite{sperb1972untere, giorgi2005monotonicity}, we establish an inequality, which is a new result for $p \neq 2$ and an improvement of the existing inequality for $p=2$, see Figure \ref{fig:comp}. 

\begin{figure}
	\centering
	\includegraphics[scale=0.4]{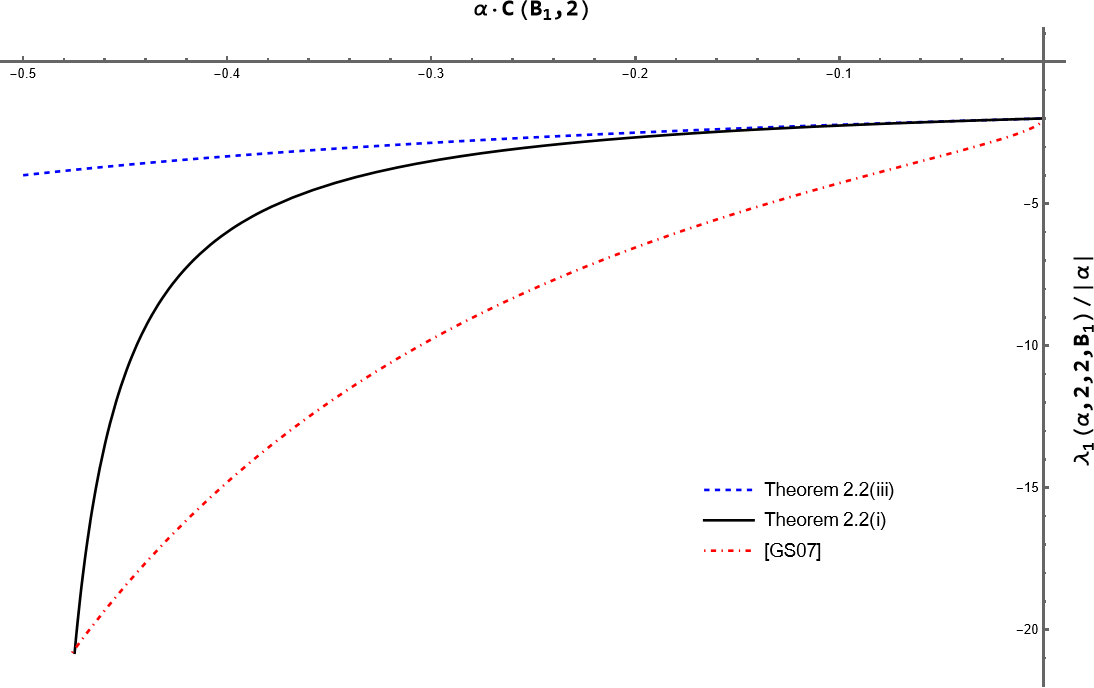}
		\caption{Comparison of the lower bounds of $\lambda_1(\alpha,2,2,B_1)$ for $\alpha <0$}
	\label{fig:comp}
\end{figure}

\begin{customthm}{\ref{theo:lowbound1}}
Let $n \geq 2$, $\Omega \subset \mathbb{R}^n$ be a bounded Lipschitz domain and define
\begin{align*}
C(\Omega,p) := \sup_{\substack{w \in W^{1,p}(\Omega)\\ \int_{\Omega} w \,  \mathrm{d}x= 0}} \frac{\int_{\partial \Omega} |w|^p \, \mathrm{d}S}{\int_{\Omega}| \nabla w|^p \, \mathrm{d}x}.
\end{align*} 
\begin{enumerate}[(i)]
\item For $p\geq 2$ and $\alpha \in \left(\frac{-2^{p-2}}{p C(\Omega,p)}, 0 \right)$,
\begin{align*}
\lambda_1(\alpha,p,n,\Omega) \geq \alpha \frac{|\partial \Omega|}{|\Omega|} \left(1+ 2^{p-2}(p-1)\sqrt[p-1]{\frac{2^{p-2} |\alpha| C(\Omega,p)}{1-2^{p-2}pC(\Omega,p)|\alpha|}} \right).
\end{align*}
\item For  $1 < p \leq 2$ and $\alpha \in \left(\frac{-1}{p C(\Omega,p)}, 0 \right)$,
\begin{align*}
\lambda_1(\alpha,p,n,\Omega) \geq \alpha \frac{|\partial \Omega|}{|\Omega|} \left(1+ (p-1)\sqrt[p-1]{\frac{ |\alpha| C(\Omega,p)}{1-pC(\Omega,p)|\alpha|}} \right).
\end{align*}
\item For $p=2$ and $\alpha \in \left(\frac{-1}{ C(\Omega,2)}, 0 \right)$ we have the stronger inequality
\begin{align*}
\lambda_1(\alpha,2,n,\Omega) \geq \alpha \frac{|\partial \Omega|}{|\Omega|} \left(1+ \frac{ |\alpha| C(\Omega,2)}{1-C(\Omega,2)|\alpha|} \right).
\end{align*}
\end{enumerate}
\end{customthm}
Note that the continuity of the trace operator $T:W^{1,p}(\Omega) \to L^p(\partial \Omega)$ and the Poincaré inequality imply $C(\Omega,p) < \infty$. The approach used to prove Theorem \ref{theo:lowbound1} can also be applied for $\alpha >0$. The resulting inequality does not improve Sperb's result for smooth domains, \cite{sperb1972untere}, but it requires less regularity of $\partial \Omega$ and shows that {our simple method} of proof can be viewed as a unified approach. 

\begin{customthm}{\ref{theo:lowbound2}}
Let $n \geq 2$, $\Omega \subset \mathbb{R}^n$ be a bounded Lipschitz domain and define
\begin{align*}
C_2(\Omega,p) := \sup_{\substack{w \in W^{1,p}(\Omega)\\ \int_{\partial \Omega} w \,  \mathrm{d}S= 0}} \frac{\int_{  \Omega} |w|^p \, \mathrm{d}x}{\int_{\Omega}| \nabla w|^p \, \mathrm{d}x}.
\end{align*} 
\begin{enumerate}[(i)]
\item If $p\geq 2$ and $\alpha \in \left(0, \frac{2^{2-p}|\Omega|}{C_2(\Omega,p) |\partial \Omega|   p} \right)$,
\begin{align*}
\lambda_1(\alpha,p,n,\Omega) \geq \alpha \frac{|\partial \Omega|}{|\Omega|} \left(1- \alpha^\frac{1}{p-1} \left( \frac{C_2(\Omega,p)  |\partial \Omega| \left(  p-1 \right)^{p-1} 2^{(p-2)p} }{|\Omega|-C_2(\Omega,p)\alpha |\partial \Omega|   2^{p-2}p}  \right)^\frac{1}{p-1} \right).
\end{align*}
\item If $1 < p \leq 2$ and $\alpha \in \left(0, \frac{|\Omega|}{C_2(\Omega,p) |\partial \Omega|   p} \right)$,
\begin{align*}
\lambda_1(\alpha,p,n,\Omega) \geq \alpha \frac{|\partial \Omega|}{|\Omega|} \left(1- \alpha^\frac{1}{p-1} \left( \frac{C_2(\Omega,p)  |\partial \Omega| \left(  p-1 \right)^{p-1}  }{|\Omega|-C_2(\Omega,p)\alpha |\partial \Omega|   p}  \right)^\frac{1}{p-1} \right).
\end{align*}
\item If $p=2$ and $\alpha \in \left(0, \frac{|\Omega|}{C_2(\Omega,2) |\partial \Omega|} \right)$,
\begin{align*}
\lambda_1(\alpha,2,n,\Omega) \geq \alpha \frac{|\partial \Omega|}{|\Omega|} \left(1- \frac{c(\alpha)}{2} \right),
\end{align*}
where $c(\alpha) :=  2+\frac{\frac{|\Omega|}{|\partial \Omega|}  -\alpha  C_2(\Omega,2)   }{\alpha   C_2(\Omega,2)  }-\sqrt{\frac{\frac{|\Omega|}{|\partial \Omega|}  -\alpha  C_2(\Omega,2)   }{\alpha   C_2(\Omega,2)  }}\sqrt{4+\frac{\frac{|\Omega|}{|\partial \Omega|}  -\alpha  C_2(\Omega,2)   }{\alpha   C_2(\Omega,2)  }}$.
\end{enumerate}
\end{customthm}
The Poincaré-Wirtinger inequality,  see e.g. \cite[Theorem 4.4.6]{ziemer2012weakly}, implies that the quantity $C_2(\Omega,p)$, which is introduced above, is finite.

In Section~\ref{sec:unbound}, we consider (\ref{eq:PDEint}) for $\mathcal{U}= \Omega^\mathrm{ext}$, where $\Omega$ is a bounded Lipschitz domain and $\Omega$, defined as $\Omega^\mathrm{ext}:= \mathbb{R}^n \setminus \overline{\Omega}$ is its exterior. We will also assume that $\Omega^\mathrm{ext}$ is connected.  In this setting, for  $\lambda_1(\alpha, p,n, \Omega^\mathrm{ext})$ defined as in \eqref{eq:lambda1int}, we first need to ensure that $\lambda_1$ is an eigenvalue, see \cite{krejcirik2016optimisation, krejvcivrik2020optimisation, krejcirik2023optimisation,bundrock2024optimizing} for $p=2$.  {A generalization of these results for $p \neq 2$ can be formulated as follows, see Section~\ref{sec:existence}.}
 
{If $\lambda_1(\alpha, p,n, \Omega^\mathrm{ext}) < 0$, then the infimum in \eqref{eq:lambda1int} is attained by solutions of}
\begin{align}\label{eq:PDE}
\begin{cases}
-\Delta_p u  = \lambda_1(\alpha, p,n, \Omega^\mathrm{ext}) |u|^{p-2} u \, &\text{ in } \Omega^\mathrm{ext}, \\
-  | \nabla u|^{p-2} \partial_\nu u + \alpha |u|^{p-2} u = 0 \, &\text{ on } \partial \Omega,
\end{cases}
\end{align}
where $\nu$ is the unit normal pointing out of $\Omega$. Furthermore,
\begin{enumerate}[(i)]
\item  for $1<p<n$, we have $\lambda_1(\alpha, p,n, \Omega^\mathrm{ext}) < 0$ iff $\alpha < \alpha^*(p,n,\Omega) < 0$, where $\alpha^*(p,n,\Omega)$ is a constant. 
\item for $p\geq n$, we have $\lambda_1(\alpha, p,n, \Omega^\mathrm{ext}) < 0$ iff $\alpha<0$. 
\end{enumerate}
From this we see that considering $\lambda_1(\alpha,p,n,\Omega^\mathrm{ext})$ is not relevant when $\alpha \geq 0$. 

In Section~\ref{sec:ball}, since  even for a ball, it is no longer feasible to explicitly determine the eigenfunctions for $p \neq 2$ and  the methods of proof for $p=2$ tend to relay on explicit formula, we analyze simple geometries and obtain a result that characterizes the asymptotic behavior as $\alpha \nearrow 0$ if $\Omega = B_R$ and $n<p$.

\begin{customthm}{\ref{theo:asym}}%\label{theo:asym}
Let $\Omega = B_R \subset \mathbb{R}^n$, where $2 \leq n<p$. For any $\varepsilon > 0$, 
\begin{align*}
\lim_{ \alpha \nearrow 0}
\frac{\lambda_1( \alpha,p,n,\Omega^\mathrm{ext})}{|\alpha|^{\frac{p}{p-n}-\varepsilon}} = 0,
\end{align*}
and
\begin{align*}
\lim_{ \alpha \nearrow 0}
\frac{\lambda_1( \alpha,p,n,\Omega^\mathrm{ext})}{|\alpha|^{\frac{p}{p-n}}} \leq - \left( \frac{ p^n}{2\Gamma(n)} \right)^{\frac{p}{p-n}}.
\end{align*} 
\end{customthm}

%Since the $p$-capacity of a point is strictly positive for $p>n$, see e.g. \cite[Example 2.12]{heinonen2018nonlinear}, one might conjecture that $\lim_{R \to 0}\lambda_1(\alpha,p,n,B_R^\mathrm{ext}) \neq 0$. However, as a direct consequence of Theorem \ref{theo:asym}, this is incorrect for $n \geq 2$.
%\begin{korollar}{{coro:cap}}
%Suppose $\alpha < 0$,  $2 \leq n < p$. Then, $\lim_{R \to 0} \lambda_1(\alpha,p,n,B_R^\mathrm{ext}) =  0$.
%\end{korollar}

In the critical case $p=n$, the behavior of $\lambda_1(\alpha,p,n,B_1^\mathrm{ext})$ can be understood by comparing it to the behavior of $\lambda_1(\alpha,2,2,B_1^\mathrm{ext})$, which can be described using Bessel functions, and it is roughly like ${-\exp({{{2}/{\alpha}}})}$, see Figure \ref{fig:comp2}. We establish this result by applying simple scaling arguments and comparison principles for differential equations.  Analogous bounds can be derived for general $B_R^\mathrm{ext}$.

\begin{figure}
	\centering
	\includegraphics[scale=0.54]{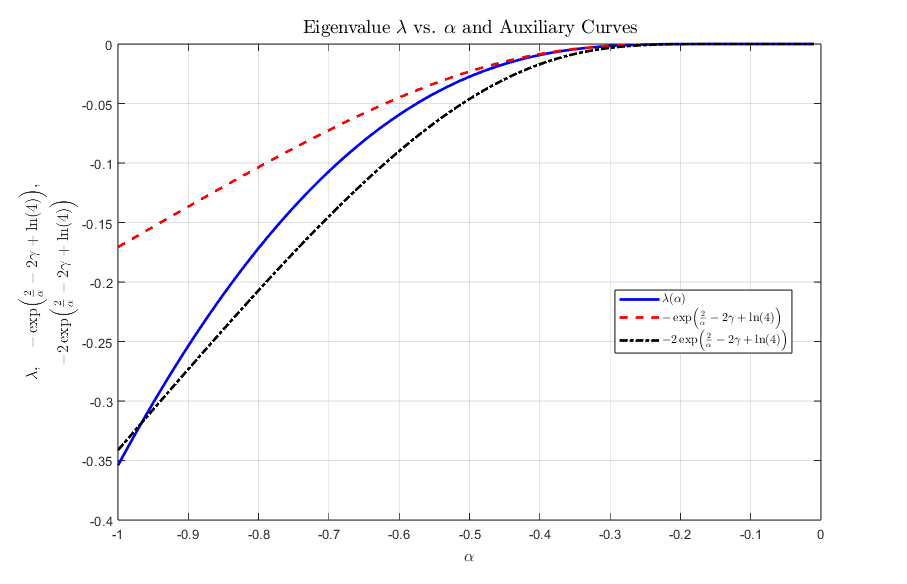}
		\caption{Behavior of $\lambda_1(\alpha,2,2,B_1)$ for $\alpha < 0$}
	\label{fig:comp2}
\end{figure}

\begin{customthm}{\ref{theo:cuteidea}}%\label{theo:cuteidea}
Suppose $n\geq 2$ and $\alpha< 0$. Then,
\begin{align*}
|\alpha|^{\frac{n-2}{n-1}} \lambda_{1}(-|\alpha|^{\frac{1} {n-1}},2,2,B_1^{\mathrm{ext}}) \leq \frac{ \lambda_{1}(\alpha,n,n,B_1^{\mathrm{ext}})}{n-1}  \leq - |\lambda_{1}(-|\alpha|^{\frac{1}{n-1}},2,2,B_1^{\mathrm{ext}})|^\frac{n}{2}.
\end{align*}
\end{customthm}

Finally, in Section~\ref{sec:pacwoman} we derive some inequalities for $\lambda_{1}(\alpha,p,n,\Omega^{\mathrm{ext}})$ depending on the geometry of $\Omega$. First, we note that, as for the case $p=2$, the disc is a maximizer in two dimensions. This result can not be extended to higher dimensions. For $n \geq 3$ and $p \in (1, \infty)$, there exists an ellipsoid $E$ with $|E|  = |B_1|$ and
\begin{align*}
\lambda_1 \left( \alpha,p,n,B_1^\mathrm{ext} \right)  < \lambda_1(\alpha,p,n,E^\mathrm{ext})
\end{align*}
for sufficiently negative $\alpha$.

We remark that the minimizing problem of $\lambda_1 \left( \alpha,p,n,\Omega^\mathrm{ext} \right)$ among domains with given measure is only interesting for convex domains, as per theorem below, see Figure~\ref{fig:pacman}.

\begin{figure}
	\centering
	\includegraphics[scale=0.4]{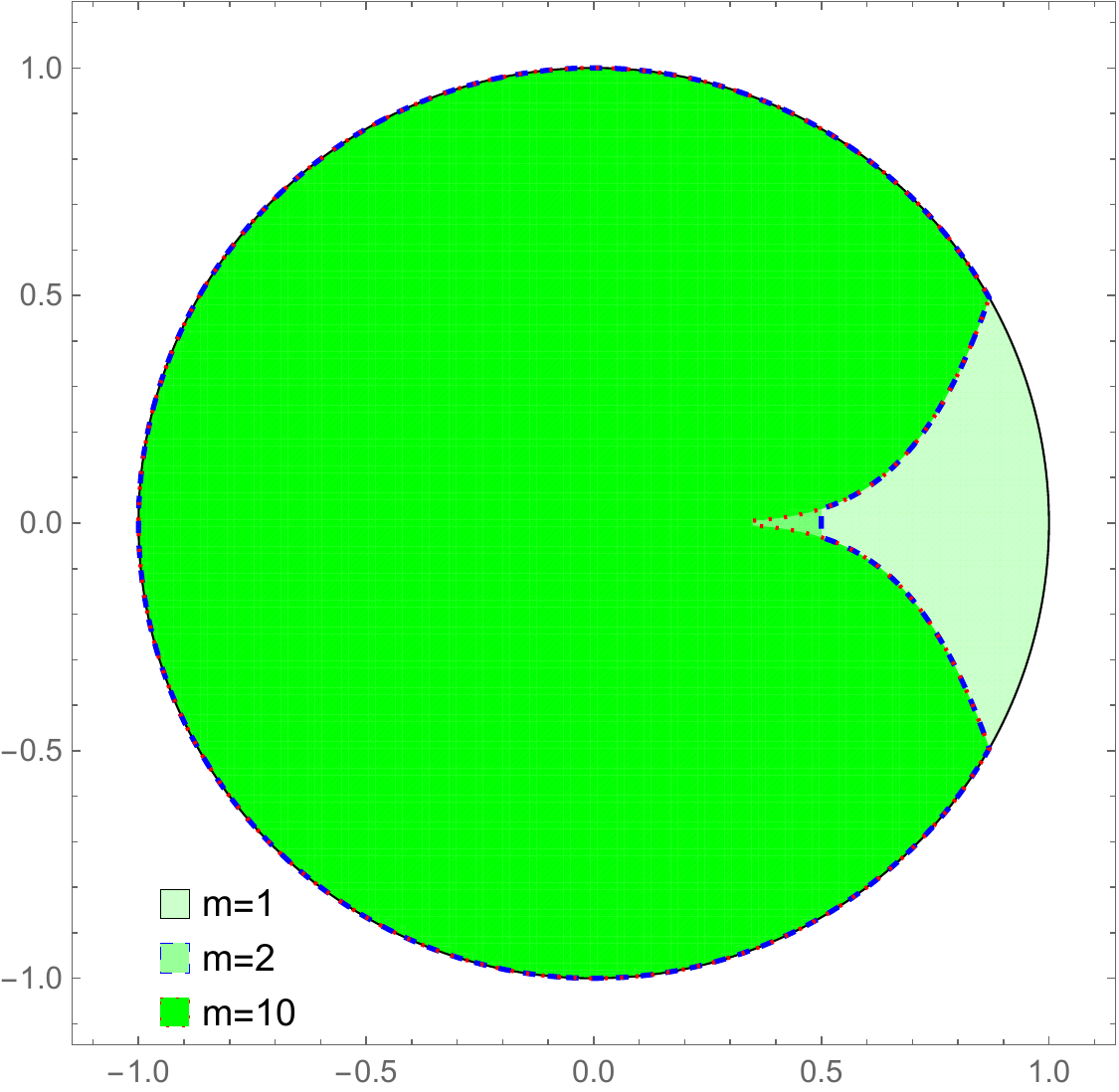}
		\caption{$\Omega_{m}$ from Theorem \ref{prop:bound} for $p=2$}
	\label{fig:pacman}
\end{figure}

\begin{customthm}{\ref{prop:bound}}
For any $\alpha < 0$,
\begin{align*}
\inf \lambda_1(\alpha,2,2,\Omega^\mathrm{ext}) \geq -|\alpha|^2,
\end{align*}
where the infimum is taken over all convex, bounded domains $\Omega \subset \mathbb{R}^2$.

For any $\alpha < 0$ and $p \in (1, \infty)$, there exists a sequence $( \Omega_m )_{m \in \mathbb{N}} \subset \mathbb{R}^n$ (not convex) with $\partial \Omega \in \mathcal{C}^{0,1}$ and $|\Omega_m|, |\partial \Omega_m | < C$ such that
\begin{align*}
\lim_{m \to \infty} \lambda_1(\alpha , p ,2, \Omega_m^\mathrm{ext}) = - \infty.
\end{align*}
\end{customthm}

\section{Eigenvalue Estimates on Bounded Domains}\label{sec:bounded}

\subsection{Estimates for $\alpha<0$}\label{sec:alpha<0}
In here, we take $\mathcal{U}$ to be a bounded domain $\Omega$. As mentioned in the introduction, for $\alpha > 0$ and $p=2$ bounds for $\lambda_1(\alpha,p,n,\Omega)$ have been found by Sperb, \cite{sperb1972untere}. Following Sperb's idea of decomposing $W^{1,2}(\Omega)$ into two function spaces, a lower bound for $\lambda_1(\alpha,p,n,\Omega)$ when $p=2$, $\alpha < 0$
is derived in \cite{giorgi2007eigenvalue}. For $p \neq 2$, the asymptotic behavior of $\lambda_1(\alpha,p,n,\Omega)$ when $\alpha \nearrow 0$ is studied by Barbato and Della Pietra, \cite{barbato2024upper}. In Theorem \ref{theo:lowbound1}, we establish a quantitative inequality for any $1<p<\infty$ and $\alpha<0$, which  improves the result in \cite{giorgi2007eigenvalue} for $p=2$.

\phantomsection
\refstepcounter{theorem} % Erhöht den Theorem-Zähler, ohne etwas anzuzeigen
\label{theo:lowbound1}

\begin{proof}[Proof of Theorem \ref{theo:lowbound1}]
For any $u \in W^{1,p}(\Omega)$, we define $h := \frac{1}{|\Omega|} \int_{\Omega} u \, \mathrm{d}x$ and $w:= u-h$. Hence, $\int_{\Omega} w \, \mathrm{d}x = 0$. By Jensen's inequality with the probability measure $\mathrm{d} \mu = \frac{1}{|\Omega|} \, \mathrm{d} x$, we have
\begin{align}\label{eq:Jensen}
|\Omega| |h|^p = |\Omega| \left| \int_{\Omega} u(x) \, \mathrm{d} \mu \right|^p \leq |\Omega| \int_{\Omega} |u(x)|^p \, \mathrm{d} \mu = \int_{\Omega} |u(x)|^p \, \mathrm{d} x.
\end{align}
Starting with \eqref{eq:lambda1int} and decomposing any function as described above, we obtain
\begin{align*}
\lambda_1(\alpha, p , \Omega) &= \inf_{\substack{w \in W^{1,p}(\Omega),\\\int_{\Omega} w \, \mathrm{d}x = 0,\\ h \in \mathbb{R}}} \frac{\int_{\Omega} | \nabla w|^{p} \, \mathrm{d}x + \alpha \int_{\partial \Omega} |w+h|^p \, \mathrm{d}S}{\int_{\Omega } |w+h|^p \, \mathrm{d}x}.
\end{align*}
To estimate the boundary integral, we will use Lemma \ref{lemma:basicineq} \eqref{eq:ungl1} and \ref{lemma:basicineq}\eqref{eq:ungl2} for $p \geq 2$ and $p \leq 2$ respectively. Only the proof for $p \geq 2$ is presented here, since the proofs of both cases do not differ except for a constant.

Suppose $p \geq 2$ and $\alpha \in \left(\frac{-2^{p-2}}{p C(\Omega,p)}, 0 \right)$.  Using Lemma \ref{lemma:basicineq}\eqref{eq:ungl1}, we have
\begin{align*}
\int_{\partial \Omega} |w+h|^p \, \mathrm{d}S \leq \int_{\partial \Omega}|h|^p + 2^{p-2} p \left( |w|^p + |w| |h|^{p-1} \right) \, \mathrm{d}S,
\end{align*}
and therefore $\lambda_1(\alpha, p,n , \Omega)$ is greater than
\begin{align*}
& \inf_{\substack{w \in W^{1,p}(\Omega),\\\int_{\Omega} w \, \mathrm{d}x = 0,\\ h \in \mathbb{R}}} \frac{\int_{\Omega} | \nabla w|^{p} \, \mathrm{d}x + \alpha \int_{\partial \Omega}|h|^p + 2^{p-2} p \left( |w|^p + |w| |h|^{p-1} \right) \, \mathrm{d}S}{\int_{\Omega } |w+h|^p \, \mathrm{d}x}\\
\geq \,&  \inf_{\substack{w \in W^{1,p}(\Omega),\\\int_{\Omega} w \, \mathrm{d}x = 0,\\ h \in \mathbb{R}}} \alpha \frac{ | \partial \Omega||h|^p}{|\Omega| |h|^p}  + \frac{\int_{\Omega} | \nabla w|^{p} \, \mathrm{d}x + \alpha 2^{p-2} p \int_{\partial \Omega}   |w|^p + |w| |h|^{p-1}  \, \mathrm{d}S}{\int_{\Omega } |w+h|^p \, \mathrm{d}x}
\end{align*}
where we used \eqref{eq:Jensen} in the second step. 
Hence, it remains to show that for any $w \in W^{1,p}(\Omega)$ with $\int_{\Omega} w \, \mathrm{d}x = 0$ and any $h \in \mathbb{R}$,
\begin{align*}
& \frac{\int_{\Omega} | \nabla w|^{p} \, \mathrm{d}x + \alpha 2^{p-2} p \int_{\partial \Omega}   |w|^p + |w| |h|^{p-1}  \, \mathrm{d}S}{\int_{\Omega } |w+h|^p \, \mathrm{d}x} \geq \alpha \frac{|\partial \Omega|}{|\Omega|}  2^{p-2}(p-1)\sqrt[p-1]{\frac{2^{p-2} |\alpha| C(\Omega,p)}{1-2^{p-2}pC(p,\Omega)|\alpha|}}.
\end{align*}
For convenience, we introduce the notation
\begin{align*}
A:= \alpha 2^{p-2} p \int_{\partial \Omega}   |w|   \, \mathrm{d}S, \quad \quad B:= \alpha |\partial \Omega|  2^{p-2}(p-1)\sqrt[p-1]{\frac{2^{p-2} |\alpha| C(\Omega,p)}{1-2^{p-2}pC(p,\Omega)|\alpha|}}.
\end{align*}
Then, after multiplication with  $\int_{\Omega } |w+h|^p \, \mathrm{d}x$, the inequality that has to be shown can be written as
\begin{align*}
\int_{\Omega} | \nabla w|^{p} \, \mathrm{d}x + \alpha 2^{p-2} p \int_{\partial \Omega}   |w|^p   \, \mathrm{d}S+ A |h|^{p-1}  \geq  \frac{B}{|\Omega|}\int_{\Omega } |w+h|^p \, \mathrm{d}x.
\end{align*}
In view of  \eqref{eq:Jensen}, $\int_{\Omega } |w+h|^p \, \mathrm{d}x  \geq |\Omega| |h|^p$. Hence, it remains to show
\begin{align}\label{eq:ineqAB}
\int_{\Omega} | \nabla w|^{p} \, \mathrm{d}x + \alpha 2^{p-2} p \int_{\partial \Omega}   |w|^p   \, \mathrm{d}S+ A |h|^{p-1} -B |h|^p \geq 0.	
\end{align}
For $a,b \in \R$, $a,b<0$, the function $f:[0,\infty) \to \mathbb{R}$, $f(x) := a x^{p-1}-b x^p$, attains its minimum at $x^*= \frac{a(p-1)}{bp}>0$ with
\begin{align*}
f \left( x^* \right) = a \left( \frac{a(p-1)}{bp} \right)^{p-1}-b \left( \frac{a(p-1)}{bp} \right)^p = \frac{a}{p}  \left( \frac{a(p-1)}{bp} \right)^{p-1}.
\end{align*}
With $a=A$, $b=B$ and $x=|h|$, we get
\begin{align*}
&\int_{\Omega} | \nabla w|^{p} \, \mathrm{d}x + \alpha 2^{p-2} p \int_{\partial \Omega}   |w|^p   \, \mathrm{d}S+ A |h|^{p-1} -B |h|^p\\
 \geq\, &\int_{\Omega} | \nabla w|^{p} \, \mathrm{d}x + \alpha 2^{p-2} p \int_{\partial \Omega}   |w|^p   \, \mathrm{d}S+  \alpha 2^{p-2}   \frac{ \left( \int_{\partial \Omega}   |w|   \, \mathrm{d}S \right)^p}{ |\partial \Omega|^{p-1}   \frac{2^{p-2} |\alpha| C(\Omega,p)}{1-2^{p-2}pC(p,\Omega)|\alpha|} }
\end{align*}
where we inserted the definition of $A$ and $B$. By H\"older's inequality we have
\begin{align*}
\int_{\partial\Omega} |w|\,\mathrm{d}S \leq \left(\int_{\partial\Omega} |w|^p\,\mathrm{d}S\right)^{\frac{1}{p}} \left(|\partial\Omega|\right)^{\frac{1}{q}}
\end{align*}
where $\frac{1}{p}+\frac{1}{q}=1$. Raising both sides to the power $p$ and considering the definition of $C(\Omega,p)$, gives
%\begin{align*}
%\left(\int_{\partial\Omega} |w|\,\mathrm{d}S\right)^p \leq |\partial\Omega|^{p-1} \int_{\partial\Omega} |w|^p\,\mathrm{d}S.
%\end{align*}
%Combining this with , 
\begin{align*}
\left(\int_{\partial\Omega} |w|\,\mathrm{d}S\right)^p  \leq |\partial\Omega|^{p-1} C(\Omega,p) \int_{\Omega} |\nabla w|^p\,\mathrm{d}x.
\end{align*}
Therefore,
\begin{align*}
&\int_{\Omega} | \nabla w|^{p} \, \mathrm{d}x + \alpha 2^{p-2} p \int_{\partial \Omega}   |w|^p   \, \mathrm{d}S+  \alpha 2^{p-2}   \frac{ \left( \int_{\partial \Omega}   |w|   \, \mathrm{d}S \right)^p}{ |\partial \Omega|^{p-1}   \frac{2^{p-2} |\alpha| C(\Omega,p)}{1-2^{p-2}pC(p,\Omega)|\alpha|} } \\
 \geq \, &\int_{\Omega} | \nabla w|^{p} \, \mathrm{d}x \left( 1+ \alpha 2^{p-2} p C(\Omega,p)+ \alpha 2^{p-2}   \frac{  C(\Omega,p) }{    \frac{2^{p-2} |\alpha| C(\Omega,p)}{1-2^{p-2}pC(p,\Omega)|\alpha|} } \right) = 0.
\end{align*}
Hence, \eqref{eq:ineqAB} holds true and this completes the proof. The improvement for $p=2$ can be obtained by writing $(a+b)^2=b^2+2ab+a^2$ instead of using Lemma \ref{lemma:basicineq} \eqref{eq:ungl1}.
\end{proof}

Compared to \cite{giorgi2007eigenvalue}, we obtain a better lower bound when $\alpha$ is small. However, our inequality is valid for a smaller range of $\alpha$. With Lemma \ref{lemma:basicineq}, we can proceed as in \cite{giorgi2007eigenvalue} also for $p\neq 2$, which leads to the following result.

% This can be cut if needed
\begin{proposition}\label{lemma:upbound}
Let $\Omega \subset \mathbb{R}^n$, $n \geq 2$, be a bounded Lipschitz domain and define
\begin{align*}
C(\Omega,p) := \sup_{\substack{w \in W^{1,p}(\Omega)\\ \int_{\Omega} w \,  \mathrm{d}x= 0}} \frac{\int_{\partial \Omega} |w|^p \, \mathrm{d}S}{\int_{\Omega}| \nabla w|^p \, \mathrm{d}x}.
\end{align*} 
\begin{enumerate}[(i)]
\item For $1 < p \leq 2$ and $\alpha \in \left( \frac{-1}{C(\Omega,p)} ,0\right)$,
\begin{align*}
 \lambda_1(\alpha,p,n,\Omega) \geq \alpha \frac{|\partial \Omega|}{ |  \Omega|} \left( 1- \sqrt{C(\Omega,p)|\alpha|} \right)^{-2}.
\end{align*}
\item For $p \geq 2$ and $\alpha \in \left( -\frac{2^{2-p}}{C(\Omega,p)} ,0\right)$,
\begin{align*}
 \lambda_1(\alpha,p,n,\Omega) \geq \alpha \frac{|\partial \Omega|}{ |  \Omega|} \left( \sqrt{\frac{1}{2^{p-2}}}- \sqrt{C(\Omega,p)|\alpha|} \right)^{-2}.
\end{align*}
\end{enumerate}
\end{proposition}
\begin{proof}
We only write down the proof for $p \geq 2$. In the same manner, the statement can be shown for $1<p \leq 2$. Following \cite{giorgi2007eigenvalue}, for any $\lambda > 0$,
\begin{align}\label{eq:lowbound}
\frac{1}{-\lambda_1(\alpha,p,n,\Omega)} \geq \frac{\widehat{C}(\lambda, \Omega)}{|\alpha|} - \lambda,
\end{align}
where
\begin{align*}
\widehat{C}(\lambda, \Omega) := \inf_{  u \in W^{1,p}(\Omega)} \frac{\int_\Omega |  u|^p \, \mathrm{d}x + \lambda \int_\Omega |\nabla u|^p \, \mathrm{d}x }{  \int_{\partial \Omega} |u|^p \, \mathrm{d}S}.
\end{align*}
In order to find a lower bound for $\widehat{C}(\lambda, \Omega)$, we consider
\begin{align*}
\frac{1}{\widehat{C}(\lambda, \Omega)} = \sup_{  u \in W^{1,p}(\Omega)} \frac{  \int_{\partial \Omega} |u|^p \, \mathrm{d}S}{\int_\Omega |  u|^p \, \mathrm{d}x + \lambda \int_\Omega |\nabla u|^p \, \mathrm{d}x }.
\end{align*}
Any $u \in W^{1,p}(\Omega)$ may be written as $u = w+h$ where $h=\frac{1}{|\Omega|}\int_\Omega u \, \mathrm{d}x$ and $\int_\Omega w \, \mathrm{d}x = 0$. Together with the triangle inequality this yields
\begin{align*}
\frac{1}{\widehat{C}(\lambda, \Omega)} \leq  \sup_{  w \in W^{1,p}(\Omega), \int_\Omega w \, \mathrm{d}x = 0, h \in \mathbb{R}} \frac{ \left( \sqrt[p]{ \int_{\partial \Omega} |w|^p \, \mathrm{d}S} + \sqrt[p]{\int_{\partial \Omega} |h|^p \, \mathrm{d}S} \right)^p}{\int_\Omega |  w+h|^p \, \mathrm{d}x + \lambda \int_\Omega |\nabla w|^p \, \mathrm{d}x }.
\end{align*} 
Using Lemma \ref{lemma:basicineq}\eqref{eq:algineq} (for $1<p\leq 2$ use Lemma \ref{lemma:basicineq}\eqref{eq:algineqimp} instead), we obtain
\begin{align*}
\frac{2^{2-p}}{\widehat{C}(\lambda, \Omega)} & \leq  \sup_{ \substack{ w \in W^{1,p}(\Omega),\\ \int_\Omega w \, \mathrm{d}x = 0,\\ h \in \mathbb{R}}} \frac{   \int_{\partial \Omega} |h|^p \, \mathrm{d}S}{  \int_\Omega |w+h|^p \, \mathrm{d}x } +\frac{ \int_{\partial \Omega} |w|^p \, \mathrm{d}S}{ \lambda \int_\Omega |\nabla w|^p \, \mathrm{d}x } \leq   \frac{  | \partial \Omega|}{  |\Omega| } +\frac{ C(\Omega,p)}{ \lambda } 
\end{align*}
where we used \eqref{eq:Jensen} and the definition of $C(\Omega,p)$. Hence, \eqref{eq:lowbound} yields
\begin{align*}
\frac{1}{-\lambda_1(\alpha,p,n,\Omega)} \geq \frac{2^{2-p}}{|\alpha|} \frac{1}{\frac{|\partial \Omega|}{| \Omega|} + \frac{C(\Omega,p)}{\lambda} } - \lambda
\end{align*}
for any $\lambda>0$ and any $\alpha < 0$. For $|\alpha|<\frac{2^{2-p}}{C(\Omega,p)}$, we choose $\lambda = \frac{\sqrt{\frac{C(\Omega,p)}{2^{p-2}|\alpha|}}-C(\Omega,p)}{\frac{|\partial \Omega|}{|\Omega|}}$
%\begin{align*}
%\lambda = \frac{\sqrt{\frac{C(\Omega,p)}{2^{p-2}|\alpha|}}-C(\Omega,p)}{\frac{|\partial \Omega|}{|\Omega|}} >0
%\end{align*}
and obtain
\begin{align*}
\frac{1}{-\lambda_1(\alpha,p,n,\Omega)} %&\geq \frac{2^{2-p}}{|\alpha|} \frac{|\Omega|}{|\partial \Omega|}\frac{1}{1 + \frac{C(\Omega,p)}{\sqrt{\frac{C(\Omega,p)}{2^{p-2}|\alpha|}}-C(\Omega,p)} } - \frac{\sqrt{\frac{C(\Omega,p)}{2^{p-2}|\alpha|}}-C(\Omega,p)}{\frac{|\partial \Omega|}{|\Omega|}} \\
&= \frac{ |\Omega|}{|\alpha||\partial \Omega|}  \left( \sqrt{\frac{1}{2^{p-2}}}- \sqrt{C(\Omega,p)|\alpha|} \right)^2.
\end{align*}
Rearranging this inequality yields the claimed statement.
\end{proof}

%Theorem \ref{theo:lowbound1} and inserting a constant trial function, imply the (known) result that for any bounded Lipschitz domain $\Omega \subseteq \mathbb{R}^n$, $n \geq 2$ and $p>1$, 
%\begin{align*}
%\lim_{\alpha \nearrow 0} \frac{\lambda_1(\alpha,p,n,\Omega)}{\alpha} = \frac{|\partial \Omega|}{|\Omega| }.
%\end{align*}

\subsection{Estimates for $\alpha>0$}
For $\alpha > 0$, \cite{sperb1972untere, barbato2024upper} derive inequality with the help of Thomson's principle for $\partial \Omega \in \mathcal{C}^{1, \gamma}$. In Theorem \ref{theo:lowbound2}, using the idea presented in Section \ref{sec:alpha<0}, we prove an inequality for Lipschitz domains.

\refstepcounter{theorem} % Erhöht den Theorem-Zähler, ohne etwas anzuzeigen
\label{theo:lowbound2}
\begin{proof}[Proof of Theorem \ref{theo:lowbound2}]
For any $u \in W^{1,p}(\Omega)$, we define $h := \frac{1}{|\partial \Omega|} \int_{\partial \Omega} u \, \mathrm{d}S$ and $w:= u-h$. Hence, $\int_{\partial \Omega} w \, \mathrm{d}S = 0$. By Jensen's inequality with the probability measure $\mathrm{d} \mu = \frac{1}{| \partial \Omega|} \, \mathrm{d} S$, we have
\begin{align}\label{eq:Jensen1}
| \partial\Omega| |h|^p = | \partial\Omega| \left| \int_{ \partial\Omega} u \, \mathrm{d} \mu \right|^p \leq | \partial \Omega| \int_{ \partial\Omega} |u|^p \, \mathrm{d} \mu = \int_{ \partial \Omega} |u|^p \, \mathrm{d} S.
\end{align}
Starting with \eqref{eq:lambda1int} and decomposing any function as described above, we obtain
\begin{align*}
\lambda_1(\alpha, p , \Omega) &\geq \inf_{\substack{w \in W^{1,p}(\Omega)\\\int_{\partial \Omega} w \, \mathrm{d}S = 0\\ h \in \mathbb{R}}} \frac{\int_{\Omega} | \nabla w|^{p} \, \mathrm{d}x + \alpha |\partial \Omega| |h|^p}{\int_{\Omega } |w+h|^p \, \mathrm{d}x}.
\end{align*}
To estimate the denominator, we use Lemma \ref{lemma:basicineq}\eqref{eq:ungl1} for $p \geq 2$ and Lemma \ref{lemma:basicineq}\eqref{eq:ungl2} for $1< p \leq 2$. Since the two cases work analogously, we only carry out the details for $p \geq 2$. We have
\begin{align*}
\lambda_1(\alpha, p , \Omega) &\geq \inf_{\substack{w \in W^{1,p}(\Omega)\\\int_{\partial \Omega} w \, \mathrm{d}S = 0\\ h \in \mathbb{R}}} \frac{\int_{\Omega} | \nabla w|^{p} \, \mathrm{d}x + \alpha |\partial \Omega| |h|^p}{\int_{\Omega } |h|^p + 2^{p-2}p (|w|^p+|w||h|^{p-1}) \, \mathrm{d}x}.
\end{align*}
Thus, the claimed inequality is shown if, for any $h \in \R$ and any $w \in W^{1,p}(\Omega)$ with $\int_{\partial \Omega} w \, \mathrm{d}S = 0$,
\begin{align*}
\frac{\int_{\Omega} | \nabla w|^{p} \, \mathrm{d}x + \alpha |\partial \Omega| |h|^p}{\int_{\Omega } |h|^p + 2^{p-2}p (|w|^p+|w||h|^{p-1}) \, \mathrm{d}x} \geq \alpha \frac{|\partial \Omega|}{|\Omega|} \left(1- c(\alpha) \right),
\end{align*}
or equivalently,
\begin{align}
&\int_{\Omega} | \nabla w|^{p} \, \mathrm{d}x  -\alpha \frac{|\partial \Omega|}{|\Omega|} \left(1- c(\alpha) \right)  2^{p-2}p \int_{\Omega }  |w|^p \, \mathrm{d}x \nonumber \\
 \geq \, &-\alpha |\partial \Omega|  c(\alpha)   |h|^p  +\alpha \frac{|\partial \Omega|}{|\Omega|} \left(1- c(\alpha) \right)  2^{p-2}p \int_{\Omega }  |w||h|^{p-1} \, \mathrm{d}x. \label{eq:toshow1}
\end{align}
If $c(\alpha) \geq 1$, the inequity is obviously true, so we may assume  $c(\alpha) < 1$. 
As a first step, we choose $h$ such that the right hand side becomes maximal. To this end, we introduce the notation,
\begin{align*}
A:=-\alpha |\partial \Omega|  c(\alpha), \quad \quad B:=\alpha \frac{|\partial \Omega|}{|\Omega|} \left(1- c(\alpha) \right)  2^{p-2}p \int_{\Omega }  |w| \, \mathrm{d}x.
\end{align*}
Note that $A<0$ and $B>0$ if $c(\alpha)<1$. It is straight forward to show $ A x^p + B x^{p-1} \leq  \left(  \frac{1-p}{A} \right)^{p-1} \frac{B^p}{p^p}$. Thus,
\begin{align*}
&-\alpha |\partial \Omega|  c(\alpha)   |h|^p  +\alpha \frac{|\partial \Omega|}{|\Omega|} \left(1- c(\alpha) \right)  2^{p-2}p \int_{\Omega }  |w||h|^{p-1} \, \mathrm{d}x\\
\leq \,& \alpha c(\alpha) \frac{|\partial \Omega|}{|\Omega|^p} \left(  p-1 \right)^{p-1} 2^{(p-2)p} |\Omega|^{p-1} \int_{\Omega} |w|^p\,\mathrm{d}x \left( \frac{1-c(\alpha)}{c(\alpha)} \right)^p,
\end{align*}
where we used H{\"o}lder's inequality. For the left hand side of \eqref{eq:toshow1}, we can use the definition of $C(\Omega, p)$ and obtain
\begin{align*}
\int_{\Omega} | \nabla w|^{p} \, \mathrm{d}x  -\alpha \frac{|\partial \Omega|}{|\Omega|} \left(1- c(\alpha) \right)  2^{p-2}p \int_{\Omega }  |w|^p \, \mathrm{d}x  \, & \frac{\int_{ \Omega} |  w|^{p} \, \mathrm{d}x}{C(\Omega,p)}  -\alpha \frac{|\partial \Omega|}{|\Omega|}   2^{p-2}p \int_{\Omega }  |w|^p \, \mathrm{d}x.
\end{align*}
Hence, the claimed inequality is proven if
\begin{align*}
 &\frac{1}{C(\Omega,p)}  -\alpha \frac{|\partial \Omega|}{|\Omega|}   2^{p-2}p \geq  \alpha  \frac{|\partial \Omega|}{|\Omega|} \left(  p-1 \right)^{p-1} 2^{(p-2)p}  \left( \frac{1-c(\alpha)}{c(\alpha)} \right)^p c(\alpha).
\end{align*}
which is equivalent to
\begin{align}\label{eq:ineqc(alpha)}
 \left( \frac{\frac{|\Omega|}{C(\Omega,p)}  -\alpha |\partial \Omega|   2^{p-2}p}{\alpha  |\partial \Omega| \left(  p-1 \right)^{p-1} 2^{(p-2)p}} \right)^\frac{1}{p} \geq     \frac{1-c(\alpha)}{c(\alpha)}  c(\alpha)^\frac{1}{p} = c(\alpha)^{\frac{1-p}{p}}-c(\alpha)^{\frac{1}{p}}.
\end{align}
Thus, if we chose
\begin{align*}
c(\alpha):= \alpha^\frac{1}{p-1} \left( \frac{C(\Omega,p)  |\partial \Omega| \left(  p-1 \right)^{p-1} 2^{(p-2)p} }{|\Omega|-C(\Omega,p)\alpha |\partial \Omega|   2^{p-2}p}  \right)^\frac{1}{p-1}
\end{align*}
and use $c(\alpha)^{\frac{1-p}{p}}-c(\alpha)^{\frac{1}{p}} \leq c(\alpha)^{\frac{1-p}{p}}$, the claimed inequality holds true.

The improvement for $p=2$ can be obtained by using $(a+b)^2=b^2+2ab+b^2$ instead of Lemma \ref{lemma:basicineq}\eqref{eq:ungl1} and solving \eqref{eq:ineqc(alpha)} explicitly.
\end{proof}

%
%As for $\alpha <0$, Theorem \ref{theo:lowbound2} implies that
%\begin{align*}
%\lim_{\alpha \searrow 0} \frac{\lambda_1(\alpha,p,n,\Omega)}{\alpha} = \frac{|\partial \Omega|}{|\Omega|}.
%\end{align*}
%For the upper bound, it suffices to insert a constant trial function.

\section{Exterior Domains}\label{sec:unbound}

In this section, we study \eqref{eq:PDEint} with $\mathcal{U}=\Omega^\mathrm{ext}$, where $\Omega^\mathrm{ext} := \mathbb{R}^n \setminus \overline{\Omega}$ for a bounded Lipschitz domain $\Omega \subset \mathbb{R}^n$. We also assume $\Omega^\mathrm{ext}$ to be connected, note that this does not require $\Omega$ itself to be connected. 
As mentioned in the introduction, for $p=2$, related problems were explored in \cite{krejcirik2016optimisation,krejvcivrik2020optimisation,krejcirik2023optimisation,kachmar2025laplace}. 
Krej{\v{c}}i{\v{r}}{\'\i}k and Lotoreichik show for $p=2$ that the associated operator has a nonempty essential spectrum given by $ [0, \infty)$. Hence, if $\lambda_1(\alpha, 2,n, \Omega^\mathrm{ext}) < 0$, then it is part of the discrete spectrum and there exists  a corresponding eigenfunction. They also prove the existence of a constant $\alpha^*(2,n,\Omega^\mathrm{ext}) \leq 0$ such that $\lambda_1(\alpha, 2,n, \Omega^\mathrm{ext})<0$ if and only if $\alpha < \alpha^*(2,n,\Omega^\mathrm{ext})$. Furthermore, $\alpha^*(2,n,\Omega^\mathrm{ext})$ coincides with the first harmonic Steklov eigenvalue, \cite{bundrock2023robin,bundrock2024optimizing}.

\subsection{Existence of a Variational First Eigenvalue}\label{sec:existence}

For $p \neq 2$, the $p$-Laplacian is not linear, rendering the usual spectral theory inapplicable. Nevertheless, we have a similar behavior. Lemma \ref{lemma:nonpositiv} mirrors the fact that the lowest point of the essential spectrum of the $2$-Laplacian is zero.

\begin{lemma}\label{lemma:nonpositiv}
For a bounded domain $\Omega \subset \mathbb{R}^n$, $n \geq 2$ and $ p \in (1, \infty)$, one has $\lambda_1(\alpha,p,n,\Omega^\mathrm{ext}) \leq 0$ for any $\alpha \in \mathbb{R}$.
\end{lemma}
\begin{proof}
Let $\phi \in \mathcal{C}^\infty_0(\mathbb{R}^n)$ satisfy $\int_{\mathbb{R}^n} |\phi|^p \, \mathrm{d}x = 1$ and $0 \neq x_0 \in \mathbb{R}^n$. For $m \in \mathbb{N}$, we define the sequence $(\phi_m)_{m \in \mathbb{N}}$, $\phi_m : \mathbb{R}^n \to \mathbb{R}$ by
\begin{align*}
\phi_m(x) := \frac{1}{m^\frac{n}{p}}\, \phi\left( \frac{x-m^2 x_0}{m} \right).
\end{align*}
As $\phi$ has compact support, $\operatorname{supp}(\phi_m) \subset \Omega^\mathrm{ext}$ for sufficiently large $m$. %Thus, it 
%holds
%\begin{align*}
%\int_{\Omega^\mathrm{ext}} | \phi_m(x)|^p \, \mathrm{d}x  = \int_{\mathbb{R}^n} \frac{1}{m^n} \left| \phi \left( \frac{x-m^2 x_0}{m} \right) \right|^p \, \mathrm{d}x = \int_{\mathbb{R}^n}  \left| \phi \left( y \right) \right|^p \, \mathrm{d}y = 1.
%\end{align*} 
%Analogously, we obtain
%\begin{align*}
%\int_{\Omega^\mathrm{ext}} | \nabla \phi_m(x)|^p \, \mathrm{d}x = \frac{1}{m^p}  \int_{\mathbb{R}^n}  \left| \nabla \phi \left( y \right) \right|^p \, \mathrm{d}y.
%\end{align*}
Thus, it is straight forward to show $\lim_{m \to \infty} \frac{\int_{\Omega^\mathrm{ext}} | \nabla \phi_m|^{p} \, \mathrm{d}x + \alpha \int_{\partial \Omega} |\phi_m|^p \, \mathrm{d}S}{\int_{\Omega^\mathrm{ext}} |\phi_m|^p \, \mathrm{d}x}  = 0$, implying Lemma \ref{lemma:nonpositiv}.
\end{proof}

If $\alpha \geq 0$, clearly $\lambda_1(\alpha,p,n,\Omega^\mathrm{ext}) = 0$. We observe that $\lambda_1$ may also vanish for $\alpha<0$. Nonetheless, if $\lambda_1$ is strictly negative, we can ensure the existence of a minimizer for \eqref{eq:lambda1int}.

\begin{lemma}\label{theo:existencemin}
Suppose that $\Omega \subset \mathbb{R}^n$, $n \geq 2$  is a bounded  Lipschitz domain and that $ p \in (1, \infty)$. If $\lambda_1(\alpha, p,n,\Omega^\mathrm{ext}) < 0$, then $\lambda_1(\alpha, p,n,\Omega^\mathrm{ext})$ is an eigenvalue of \eqref{eq:PDE}, i.e. there exists a function $u \in W^{1,p}(\Omega^\mathrm{ext})$ solving \eqref{eq:PDEweak}.
\end{lemma}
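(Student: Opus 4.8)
The plan is to use the direct method of the calculus of variations, but the non-compactness of $\Omega^\text{ext}$ forces us to control a minimizing sequence carefully. First I would take a minimizing sequence $(u_k) \subset W^{1,p}(\Omega^\text{ext})$ for the Rayleigh quotient \eqref{eq:lambda1}, normalized so that $\int_{\Omega^\text{ext}} |u_k|^p \, \mathrm{d}x = 1$; then $\int_{\Omega^\text{ext}} |\nabla u_k|^p \, \mathrm{d}x + \alpha \int_{\partial \Omega} |u_k|^p \, \mathrm{d}S \to \lambda_1 < 0$. Using the trace inequality on the Lipschitz boundary $\partial\Omega$ (with an $\varepsilon$-small gradient term), one bounds $\alpha \int_{\partial\Omega}|u_k|^p\,\mathrm{d}S$ from below, which shows $(\nabla u_k)$ is bounded in $L^p$; together with the normalization this gives a uniform $W^{1,p}(\Omega^\text{ext})$ bound. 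Passing to a subsequence, $u_k \rightharpoonup u$ weakly in $W^{1,p}(\Omega^\text{ext})$, and by the compactness of the trace operator $W^{1,p}(\Omega^\text{ext}) \hookrightarrow L^p(\partial\Omega)$ (valid since $\partial\Omega$ is compact) we have $u_k \to u$ strongly in $L^p(\partial\Omega)$.

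The main obstacle is the loss of compactness in the bulk: the embedding $W^{1,p}(\Omega^\text{ext}) \hookrightarrow L^p(\Omega^\text{ext})$ is \emph{not} compact, so a priori the mass $\int_{\Omega^\text{ext}}|u_k|^p\,\mathrm{d}x = 1$ could escape to infinity, leaving $u \equiv 0$. The key point is that mass escaping to infinity is "free" in the sense that it contributes a Rayleigh quotient tending to $0$ (this is exactly the content of Lemma \ref{lemma:nonpositiv}), and since $\lambda_1 < 0$ strictly, not all the mass can be lost. To make this rigorous I would argue by concentration-compactness or, more elementarily, by a cutoff argument: fix a large ball $B_\rho$ and split $u_k = u_k \eta_\rho + u_k(1-\eta_\rho)$ with $\eta_\rho$ a smooth cutoff equal to $1$ on $B_\rho$. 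On the far piece $u_k(1-\eta_\rho)$, which is supported in $\{|x|\ge\rho\}\cap\Omega^\text{ext}$ and vanishes near $\partial\Omega$, the numerator of the Rayleigh quotient is $\int |\nabla(u_k(1-\eta_\rho))|^p\,\mathrm{d}x \ge 0$ (no boundary term), so the far piece carries a nonnegative "energy"; combined with $\lambda_1<0$ and the near piece converging strongly (by Rellich on the bounded set $B_\rho\cap\Omega^\text{ext}$), one shows $\limsup_k \int_{\Omega^\text{ext}\setminus B_\rho}|u_k|^p\,\mathrm{d}x \le \delta(\rho)$ with $\delta(\rho)\to 0$ as $\rho\to\infty$ — i.e. no mass escapes. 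Hence $u_k \to u$ strongly in $L^p(\Omega^\text{ext})$ and $\int_{\Omega^\text{ext}}|u|^p\,\mathrm{d}x = 1$, so $u \not\equiv 0$.

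Finally, by weak lower semicontinuity of $v \mapsto \int_{\Omega^\text{ext}}|\nabla v|^p\,\mathrm{d}x$ and strong convergence of the trace and the bulk $L^p$ norm,
\begin{align*}
\int_{\Omega^\text{ext}} |\nabla u|^p \, \mathrm{d}x + \alpha \int_{\partial\Omega}|u|^p\,\mathrm{d}S \le \liminf_{k\to\infty}\left(\int_{\Omega^\text{ext}} |\nabla u_k|^p\,\mathrm{d}x + \alpha\int_{\partial\Omega}|u_k|^p\,\mathrm{d}S\right) = \lambda_1,
\end{align*}
while the reverse inequality holds because $u$ is admissible in \eqref{eq:lambda1} with $\int_{\Omega^\text{ext}}|u|^p\,\mathrm{d}x=1$. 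Thus $u$ is a minimizer. Taking the first variation of the Rayleigh quotient at $u$ then shows $u$ satisfies \eqref{eq:PDEweak} with $\lambda = \lambda_1$: for $\phi \in W^{1,p}(\Omega^\text{ext})$, differentiating $t\mapsto \mathcal{R}(u+t\phi)$ at $t=0$ (legitimate since the quotient is Gâteaux differentiable and $\int|u|^p>0$) gives exactly the weak formulation. This completes the proof; the only genuinely delicate step, as noted, is ruling out vanishing of the minimizing sequence at infinity, which is where the strict negativity $\lambda_1<0$ is used essentially.
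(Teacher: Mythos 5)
Your overall strategy (direct method: normalized minimizing sequence, gradient bound via the trace inequality, weak limit, compact trace, weak lower semicontinuity, first variation) is the same as the paper's, and it is sound. Where you diverge is in the one genuinely delicate step, ruling out loss of mass at infinity, and here the paper has a much shorter argument that you should note. Once you know that the weak limit $u^*$ satisfies
\begin{align*}
\lambda_1 \;\geq\; \int_{\Omega^\text{ext}} |\nabla u^*|^p \, \mathrm{d}x + \alpha \int_{\partial\Omega} |u^*|^p \, \mathrm{d}S \;\geq\; \lambda_1 \int_{\Omega^\text{ext}} |u^*|^p \, \mathrm{d}x ,
\end{align*}
where the second inequality is just the definition of $\lambda_1$ as an infimum, the weak lower semicontinuity of the $L^p$-norm gives $\int_{\Omega^\text{ext}}|u^*|^p \, \mathrm{d}x \leq 1$, and dividing by $\lambda_1 < 0$ forces $\int_{\Omega^\text{ext}}|u^*|^p \, \mathrm{d}x = 1$; vanishing and dichotomy are excluded in one line, with the strict negativity used exactly once. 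Your cutoff/concentration-compactness route reaches the same conclusion but is considerably heavier, and as sketched it hides a real technical point: for $p \neq 2$ the $p$-energy does not split additively under the decomposition $u_k = u_k\eta_\rho + u_k(1-\eta_\rho)$, so the claim that the far piece ``carries a nonnegative energy'' separable from the near piece requires an IMS-type localization estimate with error terms of order $\|\nabla\eta_\rho\|_\infty$; this can be controlled by spreading the cutoff over an annulus of width $\rho$, but it must be done explicitly. So your argument is repairable and yields the (stronger, but unneeded) conclusion of strong $L^p$-convergence, whereas the paper's two-line variational trick buys the minimizer with no localization machinery at all.
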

\begin{proof}
We provide only a sketch of the proof since the result follows by well-known arguments. Let $(u_m)_{m \in \mathbb{N}} \subset W^{1,p}(\Omega^\mathrm{ext})$ be a sequence minimizing \eqref{eq:lambda1int} with $|| u_m||_{L^p(\Omega^\mathrm{ext})} = 1$. Then, $|| u_m||_{W^{1,p}(\Omega^\mathrm{ext})}$ is bounded, which implies the existence of a weakly convergent subsequence, with weak limit $u^* \in W^{1,p}(\Omega^\mathrm{ext})$ satisfying
\begin{align*}
\lambda_1(\alpha,p,n,\Omega^\mathrm{ext}) \geq  \int_{\Omega^\mathrm{ext}} | \nabla u^*|^p \, \mathrm{d}x  + \alpha \int_{\partial \Omega} | u^*|^p \, \mathrm{d}S.
\end{align*}
Thus, $\int_{\Omega^\mathrm{ext}} |  u^*|^p \, \mathrm{d}x =1$. Hence, $u^* \in W^{1,p}(\Omega^\mathrm{ext})$ is a minimizer of \eqref{eq:lambda1int}. Standard methods establish that this minimizer serves as a weak solution.
\end{proof}

If $\lambda_1(\alpha,p,n,\Omega^\mathrm{ext})=0$,  we can still derive the existence of a weakly convergent subsequence as described above. However, we cannot guarantee $u^* \not\equiv 0$.
%
%Using standard techniques, it can also be established that the eigenfunction corresponding to $\lambda_1(\alpha,p,n,\Omega^\mathrm{ext})$ is of constant sign, which implies that the first eigenvalue is simple, as discussed in \cite[Lemma 2.4, Lemma 3.1]{lindqvist1990equation}. 

As the negativity of $\lambda_1$ is crucial to ensure the existence of an eigenfunction, we characterize its dependence on $\alpha$. For $p \geq n$, we can simply follow the proof of \cite[Proposition 2]{krejcirik2016optimisation}.

\begin{lemma}\label{lemma:alpha0}
Let $2 \leq n \leq p < \infty$ and let $\Omega \subset \mathbb{R}^n$ be a bounded domain. Then,
\begin{align*}
\lambda_1(\alpha, p,n,\Omega^\mathrm{ext}) < 0 \quad \Leftrightarrow \quad  \alpha < 0.
\end{align*}
\end{lemma}
%\begin{proof}
%Define the sequence $(\phi_m)_{\{m \in \mathbb{N}, m\geq 2\}}$, $\phi_m : [0, \infty) \to \mathbb{R}$ by 
%\begin{align*}
%\phi_m(r) := \begin{cases}
%1 \, &\text{ for } r \leq m, \\
%\frac{\ln(m^2) - \ln(r)}{\ln(m^2) - \ln(m)} \, &\text{ for } m < r \leq m^2, \\
%0 \, &\text{ for } m^2< r.
%\end{cases}
%\end{align*}
%And, consider $(u_m)_{\{m \in \mathbb{N}, m\geq 2\}}$,  $u_m: \mathbb{R}^n \to \mathbb{R}$,  with $u_m(x) := \phi_m(|x|)$. Then, $u_m \in W^{1,p}(\Omega^\mathrm{ext})$ and 
%\begin{align*}
%\int_{\Omega^\mathrm{ext}} | \nabla u_m |^p \mathrm{d}x &\leq |\partial B_1| \int_0^{\infty} | \phi_m '(r)|^p r^{n-1} \mathrm{d}r  \\
%&= \begin{cases}
%\frac{|\partial B_1|}{\ln(m)^{p-1}} \, &\text{ for }n=p, \\
%\frac{|\partial B_1|}{\ln(m)^{p}}  \frac{m^{n-p}-m^{2(n-p)}}{p-n} \, &\text{ for }n<p.
%\end{cases}
%\end{align*}
%Hence, in both scenarios, we have $\lim_{m \to \infty} \int_{\Omega^\mathrm{ext}} | \nabla u_m |^p \mathrm{d}x = 0$. And, because of $
%\lim_{m \to \infty} \int_{\partial \Omega} |u_m|^p \, \mathrm{d}S = |\partial \Omega|$, for any $\alpha < 0$, there exists an $m_\alpha \in \mathbb{N}$ such that  $\int_{\Omega^\mathrm{ext}} | \nabla u_{m _\alpha}|^p \mathrm{d}x + \alpha \int_{\partial \Omega} |u_{m_\alpha}|^p \, \mathrm{d}S < 0$.
%\end{proof}

However, $\alpha<0$ does not guarantee $\lambda_1(\alpha , p , n,\Omega^\mathrm{ext}) < 0$ if $p < n$. A similar observation was made for the case $p=2$ and $n \geq 3$ in \cite{krejcirik2016optimisation}. In order to characterize the negativity of $\lambda_1(\alpha,p,n,\Omega^\mathrm{ext})$, we consider the $p$-harmonic Steklov eigenvalue problem discussed by Han in \cite{han2016first}, 
\begin{align}\label{eq:PDEStek}
\begin{cases}
\Delta_p u = 0 \, &\text{ in } \Omega^\mathrm{ext}, \\
-  |\nabla u |^{p-2}  \partial_\nu u = \mu  |u|^{p-2} u \, &\text{ on } \partial \Omega.
\end{cases}
\end{align}
If $\Omega$ is a ball  and $p \in (1,n)$, a radial solution of \eqref{eq:PDEStek} takes the form
\begin{align*}
u(x) = c_1 + c_2 |x|^{-\frac{n-p}{p-1}}, \quad c_1, c_2 \in \mathbb{R}.
\end{align*}
Setting $c_1=0$ and $c_2=1$, this function decays at infinity. However, $u \notin L^p(B_R^\mathrm{ext})$ for $\sqrt{n} \leq p$. Consequently, the solution of \eqref{eq:PDE} might not belong to $W^{1,p}(\Omega^\mathrm{ext})$. To address this issue, Auchmuty and Han introduce the space of finite $p$-energy functions, denoted by $E^{1,p}(\Omega^{\mathrm{ext}})$, \cite[Section 3]{auchmuty2014p}. Although they initially consider $n \geq 3$, they note that the space is also well-defined for $n=2$ and $p \in (1,2)$, as discussed in \cite[p. 264]{auchmuty2014p}. This follows from Sobolev embedding results, found in \cite{lieb2001analysis} for $\mathbb{R}^n$, with best constants provided by Talenti in \cite{talenti1976best}.  For $n \geq 2$, $p \in (1,n)$, and a bounded domain $\Omega \subset \mathbb{R}^n$ with Lipschitz boundary, a function $u \in E^{1,p}(\Omega^{\mathrm{ext}})$ if 
\begin{enumerate}[(i)]
\item $u \in L^1_{\text{loc}}(\Omega^\mathrm{ext})$,
\item $|\nabla u| \in L^p(\Omega^{\mathrm{ext}};\R)$,
\item  $S_c := \{ x \in \Omega^{\mathrm{ext}}: | u(x) | \geq c \}$ has finite measure for any $c > 0$.
\end{enumerate}
%The space $E^{1,p}(\Omega^\mathrm{ext})$ becomes a reflexive Banach space with the norm
%\begin{align*}
%||u||_{E^{1,p}(\Omega^\mathrm{ext})} := \left( \int_{\Omega^\mathrm{ext}} | \nabla u|^p \, \mathrm{d}x + \int_{\partial \Omega} |u|^p \, \mathrm{d}S \right)^\frac{1}{p}.
%\end{align*}
%Furthermore, the trace operator,  $T:E^{1,p}(\Omega^\mathrm{ext}) \to L^p(\partial\Omega)$ is compact. 
In \cite{han2016first}, for $p \in (1,n)$, it is shown that the first eigenvalue is given by
\begin{align}\label{eq:mu1}
\mu_1(p,n,\Omega^\mathrm{ext}) = \inf_{u \in E^{1,p}(\Omega^\mathrm{ext})} \frac{\int_{\Omega^\mathrm{ext}} | \nabla u |^p \, \mathrm{d}x}{\int_{\partial \Omega} | u |^p \, \mathrm{d}S}.
\end{align}
Moreover, the infimum is attained, and the corresponding eigenfunction serves as the minimizer. Note that for $p\geq n$, one can show that $\mu_1(p,n,\Omega^\mathrm{ext})=0$ using as test functions the sequence of Lemma~\ref{lemma:nonpositiv}.
Defining
$$
\alpha^*(p,n,\Omega^\mathrm{ext}):= -\mu_1(p,n,\Omega^\mathrm{ext}), 
$$
these properties allow us to prove Lemma \ref{lemma:alpha*} below.

\begin{lemma}\label{lemma:alpha*}
Let $2 \leq n$, $p \in (1,n)$, and $\Omega \subset \mathbb{R}^n$ be a bounded domain with Lipschitz boundary. Then, 
\begin{align*}
\lambda_1(\alpha, p,n,\Omega^\mathrm{ext}) < 0 \quad \Leftrightarrow \quad \alpha < \alpha^*(p,n,\Omega^\mathrm{ext}).
\end{align*}
\end{lemma}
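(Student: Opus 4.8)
The plan is to reduce the statement to the strict positivity of the first $p$-harmonic Steklov constant of the exterior domain,
\[
\sigma_1(p,n,\Omega^\text{ext}) := \inf\left\{\int_{\Omega^\text{ext}} |\nabla u|^p \, \mathrm{d}x : u \in W^{1,p}(\Omega^\text{ext}),\ \int_{\partial \Omega} |u|^p \, \mathrm{d}S = 1\right\},
\]
and to set $\alpha^*(p,n,\Omega^\text{ext}) := -\sigma_1(p,n,\Omega^\text{ext})$. The equivalence $\lambda_1(\alpha,p,n,\Omega^\text{ext}) < 0 \Leftrightarrow \alpha < -\sigma_1(p,n,\Omega^\text{ext})$ is then elementary: by $p$-homogeneity of both the numerator and the denominator in \eqref{eq:lambda1}, having $\lambda_1 < 0$ is equivalent to the existence of a nonzero $u$ with $\int_{\partial\Omega}|u|^p\,\mathrm{d}S > 0$ and $\int_{\Omega^\text{ext}}|\nabla u|^p\,\mathrm{d}x < -\alpha\int_{\partial\Omega}|u|^p\,\mathrm{d}S$, which after normalizing $\int_{\partial\Omega}|u|^p\,\mathrm{d}S = 1$ reads $\sigma_1 < -\alpha$; and for $\alpha\geq 0$ one has $\lambda_1 = 0$ by Lemma~\ref{lemma:nonpositiv}. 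The finiteness $\sigma_1 < \infty$, needed so that $\alpha^*$ is a genuine real number, is immediate by taking as competitor the restriction to $\Omega^\text{ext}$ of any $\phi \in \mathcal{C}^\infty_0(\mathbb{R}^n)$ which does not vanish identically on $\partial\Omega$.

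Thus the whole content of the lemma is the trace inequality
\[
\int_{\partial\Omega} |u|^p \, \mathrm{d}S \leq C(p,n,\Omega)\int_{\Omega^\text{ext}} |\nabla u|^p \, \mathrm{d}x \qquad \text{for all } u \in W^{1,p}(\Omega^\text{ext}),
\]
i.e.\ $\sigma_1 > 0$, and this is the only place where the assumption $p < n$ is used. (The inequality genuinely fails for $p \geq n$: the functions $u_m$ from the proof of the previous lemma satisfy $\int_{\partial\Omega}|u_m|^p\,\mathrm{d}S \to |\partial\Omega|$ while $\int_{\Omega^\text{ext}}|\nabla u_m|^p\,\mathrm{d}x \to 0$, consistent with $\alpha^* = 0$ in that regime.) To prove it I would fix $R>0$ with $\overline\Omega \subset B_R$ and split $\Omega^\text{ext}$ into the bounded Lipschitz collar $D := B_{2R} \cap \Omega^\text{ext}$ and the exterior $\mathbb{R}^n \setminus B_{2R} \subset \Omega^\text{ext}$. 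On $D$ the trace theorem bounds $\int_{\partial\Omega}|u|^p\,\mathrm{d}S$ by $C\big(\|u\|_{L^p(D)}^p + \|\nabla u\|_{L^p(D)}^p\big)$, and the Poincar\'e--Wirtinger inequality on the connected domain $D$ bounds $\|u - c\|_{L^p(D)}$ by $C\|\nabla u\|_{L^p(D)}$, where $c$ is the mean of $u$ over $D$; so everything reduces to controlling the single constant $c$ by $\|\nabla u\|_{L^p(\Omega^\text{ext})}$.

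The control of $c$ is exactly where $p<n$ enters, through the following exterior estimate on the complement of a ball: for every $v \in W^{1,p}(\mathbb{R}^n\setminus B_{2R})$,
\[
\int_{\partial B_{2R}} |v|^p \, \mathrm{d}S \leq c_1(R,n,p)\int_{\mathbb{R}^n\setminus B_{2R}} |\nabla v|^p \, \mathrm{d}x .
\]
This follows by writing, for almost every direction $\theta \in \partial B_1$, $|v(2R\theta)| = \big|\int_{2R}^\infty \partial_\rho v(\rho\theta)\,\mathrm{d}\rho\big| \leq \int_{2R}^\infty |\partial_\rho v(\rho\theta)|\,\mathrm{d}\rho$, applying H\"older's inequality with the weight $\rho^{n-1}$, and integrating over $\theta$; the only nontrivial input is the finiteness of $\int_{2R}^\infty \rho^{-(n-1)/(p-1)}\,\mathrm{d}\rho$, which holds precisely because $p < n$, and the same computation shows $v(\rho\theta)\to 0$ as $\rho\to\infty$ along almost every ray, which legitimizes the fundamental-theorem-of-calculus step. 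Applying this estimate to $v = u$ on $\mathbb{R}^n\setminus B_{2R}$, together with the trace theorem on $D$ applied to $u - c$ (note $\partial B_{2R}\subset\partial D$), one obtains $|c|^p \leq C\|\nabla u\|_{L^p(\Omega^\text{ext})}^p$; combining this with the estimates of the previous paragraph yields the trace inequality, hence $\sigma_1 > 0$ and $\alpha^* = -\sigma_1 < 0$.

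I expect the radial/capacity estimate of the last paragraph to be the crux of the argument — both because it is the one spot where $p<n$ is truly needed, and because one must be a little careful to use exactly that estimate rather than a stronger bound involving a trace norm on $\partial B_{2R}$, which would be false. Everything else is standard Sobolev-space bookkeeping, once one has set up the reduction $\alpha^* = -\sigma_1$.
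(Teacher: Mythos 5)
Your proposal is correct, and it reaches the same structural conclusion as the paper (the whole lemma reduces to a trace-type inequality $\int_{\partial\Omega}|u|^p\,\mathrm{d}S \leq C\int_{\Omega^{\text{ext}}}|\nabla u|^p\,\mathrm{d}x$ on $W^{1,p}(\Omega^{\text{ext}})$, valid precisely because $p<n$), but the way you obtain that inequality is genuinely different. The paper imports the exterior Sobolev--Poincar\'e inequality of Lu and Ou, $\|u-(u)_\infty\|_{L^{np/(n-p)}(\Omega^{\text{ext}})}\leq c\,\|\nabla u\|_{L^p(\Omega^{\text{ext}})}$, shows $(u)_\infty=0$ for $u\in L^p$, converts the $L^{np/(n-p)}$ bound into a weighted $L^p$ bound via H\"older against $e^{-|x|}$, restricts to a bounded collar, and applies the trace theorem there; the threshold $\alpha^*$ is then produced by monotonicity of $\alpha\mapsto\lambda_1$. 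You instead control the trace directly and elementarily: a ray-wise fundamental-theorem-of-calculus estimate gives $\int_{\partial B_{2R}}|u|^p\,\mathrm{d}S\leq c\int_{\mathbb{R}^n\setminus B_{2R}}|\nabla u|^p\,\mathrm{d}x$ (the convergence of $\int_{2R}^\infty \rho^{-(n-1)/(p-1)}\,\mathrm{d}\rho$ being exactly where $p<n$ enters, matching the paper's use of the same hypothesis inside the cited Sobolev inequality), which pins down the mean value on the collar, and Poincar\'e--Wirtinger plus the trace theorem on the collar finish the job. You also make the threshold explicit as $\alpha^*=-\sigma_1$ with $\sigma_1$ the $W^{1,p}$-Steklov-type constant, proving the equivalence by $p$-homogeneity rather than by monotonicity; this anticipates, in the $W^{1,p}$ setting, the identification $\alpha^*=-\mu_1$ that the paper only establishes later in Theorem~\ref{satz:alpha*var'} via the space $E^{1,p}$. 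Your route is more self-contained (no external citation needed) at the cost of not yielding the stronger $L^{np/(n-p)}$ integrability that the paper's route provides and reuses elsewhere; your identification of the set of $u_m$ from the preceding lemma as witnesses that $\sigma_1=0$ when $p\geq n$ is a nice consistency check that the paper does not spell out.
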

\begin{proof}
If $\lambda_1(\alpha, p,n,\Omega^\mathrm{ext}) < 0$, there exists a  $u \in W^{1,p}(\Omega^{\mathrm{ext}})$ with $\int_{\Omega^\mathrm{ext}} | \nabla u |^p \, \mathrm{d}x+ \alpha \int_{\partial \Omega} | u |^p \, \mathrm{d}S <0$. Since $W^{1,p}(\Omega^{\mathrm{ext}}) \subseteq  E^{1,p}(\Omega^{\mathrm{ext}})$, this implies $\mu_1(p,n,\Omega^\mathrm{ext}) <- \alpha $.

To prove the reverse inequality, we approximate the first eigenfunction of \eqref{eq:PDEStek} using functions with compact support. For a function $u \in W^{1,p}_{\text{loc}}(\Omega^{\mathrm{ext}})$ with $| \nabla u | \in L^p(\Omega^{\mathrm{ext}})$, it is shown in \cite[Theorem 1.1, Proposition 2.2]{LuOu}, that  $w := u - \left( u \right)_\infty$ can be approximated by smooth functions, where $ \left( u \right)_\infty = 0$ for any $u \in E^{1,p}(\Omega^{\mathrm{ext}})$. Specifically,  for every $\varepsilon > 0$ there exists a $\psi_R \in \mathcal{C}^\infty_0(\mathbb{R}^n)$  with $\psi_R(x) = 1$ for $|x|<R$, satisfying
\begin{align*}
\int_{\Omega^{\mathrm{ext}}} | \nabla w - \nabla (w \,  \psi_R )|^p  \,  \mathrm{d}x < \varepsilon.
\end{align*}
Let $u_1 \in E^{1,p}(\Omega^{\mathrm{ext}})$ be the first eigenfunction of \eqref{eq:PDEStek} with  $||u_1||_{L^p(\partial \Omega)} = 1$.
By choosing $R$ large enough, such that $\psi_R(x) = 1 $ for all $x \in \partial \Omega$, we have
\begin{align*}
\int_{\partial \Omega} |u_1 \, \psi_R |^p\,  \mathrm{d}S = \int_{\partial \Omega} |u_1|^p\,  \mathrm{d}S = 1.
\end{align*}
For  $\alpha < -\mu_1(p,n,\Omega^\mathrm{ext})$, we pick $R$ large enough such that $\phi := u_1 \,  \psi_R $ satisfies
\begin{align*}
\int_{\Omega^{\mathrm{ext}}} | \nabla \phi  |^p - |\nabla u_1|^p \,  \mathrm{d}x  <  \frac{-\mu_1(p,n,\Omega^\mathrm{ext}) - \alpha}{2},
\end{align*}
which implies
\begin{align*}
\int_{\Omega^{\mathrm{ext}}} | \nabla \phi  |^p \,  \mathrm{d}x + \alpha \int_{ \partial \Omega} | \phi  |^p\,  \mathrm{d}S &< \frac{-\mu_1(p,n,\Omega^\mathrm{ext}) - \alpha}{2} +\int_{\Omega^{\mathrm{ext}}} | \nabla u_1  |^p \,  \mathrm{d}x + \alpha \\
&= \frac{-\mu_1(p,n,\Omega^\mathrm{ext}) - \alpha}{2} +\mu_1(p,n,\Omega^\mathrm{ext}) + \alpha < 0.
\end{align*}
Using $\phi$ as a trial function in \eqref{eq:lambda1int}, yields $\lambda_1(\alpha,p,n,\Omega^{\mathrm{ext}}) < 0$.
\end{proof}

%For the $2$-Laplacian, the threshold value $\alpha^*(2,n,\Omega)$ coincides with the first harmonic Steklov eigenvalue, as shown by Bundrock in \cite[Theorem 2.5]{bundrock2023robin}, \cite[Theorem 1]{bundrock2024optimizing}. We establish a similar result involving the first $p$-harmonic Steklov eigenvalue.

%As noted before, the proof of Lemma \ref{theo:existencemin} fails if $\lambda_1(\alpha,p,n,\Omega^\mathrm{ext}) = 0$. To illustrate the issue arising in this case, we recall an example from  \cite[(4.2)]{auchmuty2014p}. Consider  \eqref{eq:PDE} for $\Omega = B_R \subset \mathbb{R}^n$, with $n \geq 2$. 

\begin{example}
Consider $\Omega = B_R \subset \mathbb{R}^n$, with $n \geq 2$ and $p \in (1,n)$. Then, the first eigenfunction of \eqref{eq:PDEStek} is  given by $u(x) = |x|^{-\frac{n-p}{p-1}}  $. The boundary condition of \eqref{eq:PDEStek} implies $ \mu_1(p,n, B_R^\mathrm{ext}) = \left( \frac{1}{R} \frac{n-p}{p-1} \right)^{p-1}$. Thus, according to Lemma \ref{lemma:alpha*}, 
\begin{align*}
\lambda_1(\alpha,p,n,B_R^\mathrm{ext}) < 0 \quad \Leftrightarrow \quad \alpha <  -\left(  \frac{n-p}{R(p-1)} \right)^{p-1}.
\end{align*}
\end{example}

\subsubsection{Supersolution Characterization of the First Eigenvalue}\label{subsub_super}

The following is an equivalent definition of the first variational eigenvalue, which is needed in the proof of Theorem~\ref{theo:cuteidea} in Section~\ref{sec:ball}. We define the set
\begin{align*}
S=&\{\lambda \in \R : \exists \, u >0 \text{ on } \Omega^{\text{ext}}, u \in W^{1, p}(\Omega^{\text {ext }}) \text{ with } -|\nabla u|^{p-2}\partial_\nu u +\alpha|u|^{p-2} u=0 \text { on } \partial \Omega\\ & \text{ and } \Delta_p u+\lambda|u|^{p-2} u \geq 0  \text { in } \Omega^{\text {ext }} \text{ (in the weak sense)} \}.
\end{align*}
And, if $S$ is not empty, we define $\lambda_1^* = \inf(S)$.

\begin{lemma}
If $\alpha < \alpha^*(p,n, \Omega^{\mathrm{ext}}) \leq 0$, then
$$
\lambda_1(\alpha, p,n, \Omega^{\mathrm{ext}}) = \lambda_1^*.
$$
\end{lemma}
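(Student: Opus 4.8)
The plan is to establish the two inequalities $\lambda_1^*\le\lambda_1(\alpha,p,n,\Omega^{\text{ext}})$ and $\lambda_1(\alpha,p,n,\Omega^{\text{ext}})\le\lambda_1^*$ separately. The hypothesis $\alpha<\alpha^*(p,n,\Omega^{\text{ext}})$ is used only to guarantee that $\lambda_1(\alpha,p,n,\Omega^{\text{ext}})<0$ (by the characterization of $\alpha^*$ from Section~\ref{sec:negativity}: Lemma~\ref{lemma:alpha*} when $p<n$, and the convention $\alpha^*=0$ when $n\le p$); this negativity is exactly what makes both halves of the argument run.

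First I would prove $\lambda_1^*\le\lambda_1$. Since $\lambda_1<0$, Lemma~\ref{theo:existencemin} provides an eigenfunction $u_1\in W^{1,p}(\Omega^{\text{ext}})$ associated with $\lambda_1$. It has constant sign, so we may assume $u_1\ge0$, and by the Harnack inequality for the $p$-Laplacian $u_1>0$ throughout the connected open set $\Omega^{\text{ext}}$. Because $u_1$ solves \eqref{eq:PDEweak} with $\lambda=\lambda_1$, it satisfies $\Delta_p u_1+\lambda_1|u_1|^{p-2}u_1=0\ge0$ in $\Omega^{\text{ext}}$ together with the Robin condition, hence $\lambda_1\in S$. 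In particular $S\neq\emptyset$, so $\lambda_1^*$ is well defined and $\lambda_1^*=\inf S\le\lambda_1$.

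Next I would prove $\lambda_1\le\lambda_1^*$ by showing $\lambda_1\le\lambda$ for every $\lambda\in S$. Fix such a $\lambda$ with witness $u>0$, $u\in W^{1,p}(\Omega^{\text{ext}})$, satisfying $\Delta_p u+\lambda|u|^{p-2}u\ge0$ weakly in $\Omega^{\text{ext}}$ and $-|\nabla u|^{p-2}\partial_\nu u+\alpha|u|^{p-2}u=0$ on $\partial\Omega$. Integrating by parts on $\Omega^{\text{ext}}$ and substituting the Robin condition for the conormal boundary term on $\partial\Omega$ (the boundary contribution at infinity vanishes since $u\in W^{1,p}$), this means
\[
\int_{\Omega^{\text{ext}}}|\nabla u|^{p-2}\langle\nabla u,\nabla\psi\rangle\,\mathrm dx+\alpha\int_{\partial\Omega}|u|^{p-2}u\,\psi\,\mathrm dS\le\lambda\int_{\Omega^{\text{ext}}}|u|^{p-2}u\,\psi\,\mathrm dx
\]
for all $\psi\in W^{1,p}(\Omega^{\text{ext}})$ with $\psi\ge0$. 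Taking $\psi=u$ — legitimate after truncating, i.e. applying the inequality to $\psi_R=u\,\eta_R$ for a cutoff $\eta_R$ and letting $R\to\infty$, since the cross term $\int|\nabla u|^{p-2}\langle\nabla u,u\nabla\eta_R\rangle$ tends to $0$ as $\nabla u,u\in L^p$ — yields
\[
\int_{\Omega^{\text{ext}}}|\nabla u|^p\,\mathrm dx+\alpha\int_{\partial\Omega}|u|^p\,\mathrm dS\le\lambda\int_{\Omega^{\text{ext}}}|u|^p\,\mathrm dx.
\]
Since $u>0$ is admissible in \eqref{eq:lambda1} and $\int_{\Omega^{\text{ext}}}|u|^p>0$, this gives $\lambda_1(\alpha,p,n,\Omega^{\text{ext}})\le\lambda$; as $\lambda\in S$ was arbitrary, $\lambda_1\le\inf S=\lambda_1^*$.

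I expect the only real obstacle to be the precise weak formulation underlying membership in $S$: one must justify that ``$u>0$ is a weak supersolution satisfying the Robin condition'' is equivalent to the displayed inequality holding for all nonnegative $\psi\in W^{1,p}(\Omega^{\text{ext}})$ — in particular one needs the conormal derivative of $u$ on $\partial\Omega$ to be meaningful (using Lipschitz regularity of $\partial\Omega$ and interior regularity of $u$) and the truncation-at-infinity step to handle that $u$ need not be compactly supported. Everything else — strict positivity of the first eigenfunction via Harnack, and inserting $\psi=u$ into the inequality — is routine.
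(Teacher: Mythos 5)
Your proposal is correct and follows essentially the same route as the paper: show $\lambda_1\in S$ via the existence and positivity of the first eigenfunction (giving $\lambda_1^*\le\lambda_1$), then test the supersolution inequality against $u$ itself and integrate by parts to recover the Rayleigh quotient bound $\lambda\ge\lambda_1$ for every $\lambda\in S$. Your extra care about the weak formulation and the cutoff at infinity only fills in details the paper's divergence-theorem step leaves implicit.
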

\begin{proof}
By Lemma \ref{theo:existencemin} and the remarks afterwards, $\lambda_1(\alpha, p,n, \Omega^{\mathrm{ext}}) \in S$ for $\alpha < \alpha^*(p,n, \Omega^{\mathrm{ext}})$. Hence $S$ is nonempty and $\lambda_1^* \leq  \lambda_1(\alpha, p, n, \Omega^{\mathrm{ext}})$. On the other hand, for any $\lambda \in S$ there is a positive $u$ with
$$
\Delta_p u+\lambda|u|^{p-2} u \geq 0  \text { in } \Omega^{\mathrm{ext}},
$$
then
$$
\int_{\Omega^{\text {ext }}} (\Delta_p u+\lambda|u|^{p-2} u)  u \, \mathrm{d} x \geq 0
$$
and the divergence theorem implies
$$
-\int_{\Omega^{\text {ext }}}|\nabla u|^{p-2}\langle\nabla u, \nabla u \rangle \, \mathrm{d} x-\alpha \int_{\partial \Omega}|u|^{p-2} u^2 \, \mathrm{d} S+\lambda \int_{\Omega^{\text {ext }}}|u|^{p-2} u^2 \, \mathrm{d} x \geq 0,
$$
so that $\lambda \geq \lambda_1(\alpha, p, n,\Omega^\mathrm{ext}) $. In conclusion $\lambda_1^* =\lambda_1(\alpha, p, n,\Omega^\mathrm{ext})$.
\end{proof}

We end this subsection by providing a simple but useful scaling equality.

\begin{remark}\label{remark:scaling}
For $\beta >0$, a straightforward scaling argument leads to
\begin{align*}
\lambda_1( \alpha,  p, n, \beta \Omega^\mathrm{ext}) = \frac{\lambda_1( \beta^{p-1} \alpha,  p, n,  \Omega^\mathrm{ext})}{\beta^{p}} , \quad \quad \mu_1(  p, \Omega^\mathrm{ext}) = \beta^{p-1} \mu_1(  p, \beta \Omega^\mathrm{ext}).
\end{align*}
\end{remark}

\subsection{The Eigenvalue Problem on the Exterior of a Ball}\label{sec:ball}
%In this section, we derive properties of $\lambda_1(\alpha,p,n,B_R^\mathrm{ext})$ that are essential to obtain Theorem~\ref{theo:tiziana} below, which establishes the monotonicity of $\lambda_1(\alpha,p,n,\Omega^\mathrm{ext})$ with respect to a particular type of domain inclusion.

When considering $\Omega = B_R \subset \mathbb{R}^n$, it is often possible to find explicit solutions of differential equations. For $n=1$, we do have an explicit form of the solution of \eqref{eq:PDE}, as discussed in the following remark. Here $\lambda_1(\alpha,p,1,B_R^\mathrm{ext})$ is independent of $R$.
 
\begin{remark}\label{remark:n=1}
Let $n=1$, $\alpha <0$, and $\Omega = (-R,R) $. If $u(x) = \phi(|x|) $ is the first nonnegative eigenfunction, the differential equation \eqref{eq:PDE}  reduces to
\begin{align*}
(p-1) \phi''(r) \left( -\phi'(r) \right)^{p-2} + \lambda_1(\alpha,p,1,B_R^\mathrm{ext}) \phi(r)^{p-1} &= 0 \quad \text{ for } r \in (R, \infty), \\
- \phi'(R) \left(- \phi'(R)\right)^{p-2} + \alpha \phi(R)^{p-1} &= 0,
\end{align*}
which has the solutions
\begin{align*}
\phi(r) = c \exp\left( - |\alpha|^\frac{1}{p-1} r \right), \quad \lambda_1(\alpha,p,1,\mathbb{R} \setminus [-R,R]) = -(p-1) |\alpha|^\frac{p}{p-1}.
\end{align*}
\end{remark}

%In \cite[Theorem 1.1]{kovavrik2017p}, Kova{\v{r}}{\'\i}k and Pankrashkin derive an asymptotic behavior of $\lambda_1$, which in our notation, for $\alpha \to - \infty$, reads as
%\begin{align}\label{eq:asym}
%\lambda_1(\alpha,p,n,\Omega^\mathrm{ext}) = -(p-1) | \alpha|^\frac{p}{p-1} - (n-1) H_\text{max} ( \Omega^\mathrm{ext}) |\alpha| + o(\alpha).
%\end{align}
%Here, $H(\Omega^\mathrm{ext}) = -H(\Omega) $ and $H_\text{max}(\Omega^\mathrm{ext})$ is the maximal curvature of $\partial \Omega^\mathrm{ext}$. If $\Omega$ is convex, $H(\Omega^\mathrm{ext}) \leq 0$. Consequently,  $\lambda_1(\alpha,p,n,\Omega^\mathrm{ext}) > -(p-1) | \alpha|^\frac{p}{p-1}$ for sufficiently large $|\alpha|$. For $\Omega=B_R$, we establish this inequality for all $\alpha$.

However, for $p \neq 2$, $n \geq 2$, there exists no simple form for the solution of \eqref{eq:PDE}. As a first result, we show a monotonicity result with respect to the dimension.
\begin{lemma}\label{theo:ineq}
For $\Omega_n = B_R \subset \mathbb{R}^n$, $\Omega_{n-1} = B_R \subset \mathbb{R}^{n-1}$, $n \geq 2$,  $p \in (1, \infty)$ and $\alpha < \alpha^*(p,n,\Omega_n^\mathrm{ext})$, 
\begin{align*}
\lambda_1(\alpha,p,n,\Omega_n^\mathrm{ext}) > \lambda_1(\alpha,p,n-1,\Omega_{n-1}^\mathrm{ext}) \geq  -(p-1) |\alpha|^\frac{p}{p-1}.
\end{align*}
\end{lemma}
\begin{proof}
In view of Remark \ref{remark:scaling}, it is sufficient to show Lemma \ref{theo:ineq} for $R=1$. Since the eigenfunctions $u_1$ and $u_2$, corresponding to $\lambda_1(\alpha,p,n,\Omega_n^\mathrm{ext})$ and $\lambda_1(\alpha,p,n-1,\Omega_{n-1}^\mathrm{ext})$ respectively, are radial,
\begin{align*}
\lambda_1(\alpha,p,n,\Omega_n^\mathrm{ext}) &> \frac{\int_{1}^\infty |u_1'(r)|^p r^{n-2} \, \mathrm{d}r + \alpha |u_1(0)|^p}{\int_{1}^\infty |u_1(r)|^p r^{n-2} \, \mathrm{d}r} \geq  \lambda_1(\alpha,p,n-1,\Omega_{n-1}^\mathrm{ext}),
\end{align*}
where we used $\int_{1}^\infty |u_1'(r)|^p r^{n-2} \, \mathrm{d}r + \alpha |u_1(0)|^p < 0$. Remark \ref{remark:n=1} then implies the result.
\end{proof}

For small $\alpha$, we refine the lower bound and describe the behavior of $\lambda_1$ as $\alpha \to 0$, as shown in Theorem \ref{theo:asym}. 

\refstepcounter{theorem} % Erhöht den Theorem-Zähler, ohne etwas anzuzeigen
\label{theo:asym}

\begin{proof}[Proof of Theorem \ref{theo:asym}]
By Remark~\ref{remark:scaling}, it is sufficient to consider $R=1$. To establish the upper bound, we define $u(x) := \exp(- \beta(\alpha) |x|)$, with $\beta(\alpha) := \left( \frac{|\alpha| p^n}{2\Gamma(n)} \right)^\frac{1}{p-n}$. Then,
\begin{align*}
\int_{B_1^\mathrm{ext}} |u|^p \, \mathrm{d}x &= \frac{|\partial B_1|}{(p \beta(\alpha))^n} \int_{p \beta(\alpha)}^\infty \exp(-r) r^{n-1} \, \mathrm{d}r \leq  \frac{|\partial B_1|}{(p \beta(\alpha))^n} \Gamma(n).
\end{align*}
Thus, for $\exp(-p \beta(\alpha)) > \frac{1}{2}$, we have
\begin{align*}
\lambda_1( \alpha,p,n,B_1^\mathrm{ext}) %&\leq \frac{\beta(\alpha)^{p-n} \frac{|\partial B_1|}{p^n} \Gamma(n) + \alpha |\partial B_1| \exp(-p \beta(\alpha))}{\frac{|\partial B_1|}{(p \beta(\alpha))^n} \Gamma(n)} \\
\leq  |\alpha|^\frac{p}{p-n} \frac{\frac{1}{2} - \exp(-p \beta(\alpha))}{\frac{\Gamma(n)}{p^n}  \left( \frac{ p^n}{2\Gamma(n)} \right)^\frac{-n}{p-n} }.
\end{align*}
Since $\lim_{ \alpha \nearrow 0} \exp(-p\beta(\alpha)) = 1$, this yields the desired upper bound.

For the lower bound, since $\alpha <0$ and  $p>n$ imply $\lambda_1(\alpha,p,n,B_1^\mathrm{ext}) <0$, we define $A_{R_0} := B_{R_0} \setminus \overline{B_1}$ and obtain for any $R_0>1$,
\begin{align*}
\lambda_1(\alpha,p,n,B_1^\mathrm{ext}) &\geq \inf_{w \in W^{1,p}(A_{R_0})} \frac{\int_{A_{R_0}} | \nabla w|^{p}  \mathrm{d}x + \alpha \int_{\partial B_1} |w|^p \mathrm{d}S}{\int_{A_{R_0}} |w|^p \mathrm{d}x} =: \Lambda_1(\alpha,p,R_0).
\end{align*}
Given the smoothness of the ball, standard regularity theory for elliptic equations implies that a minimizer $w \in W^{1,p}(A_{R_0})$ of $\Lambda_1(\alpha,p,R_0)$ is a  solution of
\begin{align*}
\begin{cases}
\Delta_p w + \Lambda_1(\alpha,p, R_0) |w|^{p-2} w = 0 \, &\text{ in } A_{R_0}, \\
-  | \nabla w|^{p-2} \partial_\nu w + \alpha |w|^{p-2} w = 0 \, &\text{ on } \partial B_1,\\
\partial_\nu w  = 0 \, &\text{ on } \partial B_{R_0}.
\end{cases}
\end{align*}
Since $w$ is radial, we write $w(x) = f(|x|)$ and  choose $w$ such that $f(r) \geq 0$ and $\int_{1}^{R_0} |f(r)|^p r^{n-1} \, \mathrm{d}r = 1$. Then, $f$ is decreasing and
\begin{align}\label{eq:Lambda1rad}
\Lambda_1(\alpha,p, R_0) =  \int_{1}^{R_0}  | f'(r)|^p r^{n-1} \, \mathrm{d}r + \alpha f(1)^p.
\end{align}
To find a lower bound of $\lambda_1(\alpha, p,n, B_1^\mathrm{ext})$, we  derive an upper bound for $f(1)^p$ that depends on $R_0$ and then study its behavior as $R_0 \to \infty$. Since $f'(r) < 0$, 
\begin{align*}
f(1)^p &= \left( f(1)^\frac{p}{n} \right)^n = \left( f(R_0)^\frac{p}{n}- \int_1^{R_0}  \left( f(r)^\frac{p}{n}  \right)' \, \mathrm{d}r \right)^n
%&=\left( f(R_0)^\frac{p}{n} - \int_1^{R_0}   \frac{p}{n} f'(r) f(r)^{\frac{p}{n}-1}  \, \mathrm{d}r \right)^n\\
=\left( f(R_0)^\frac{p}{n}+ \int_1^{R_0}   \frac{p}{n} |f'(r)| f(r)^{\frac{p-n}{n}}  \, \mathrm{d}r \right)^n,
\end{align*}
and
\begin{align}\label{eq:uneqf(R_0)}
1 = \int_1^{R_0} f(r)^p r^{n-1} \, \mathrm{d}r > \int_1^{R_0} f(R_0)^p r^{n-1} \, \mathrm{d}r = f(R_0)^p \,\frac{R_0^n-1}{n}.
\end{align}
So, $ \lim_{R_0 \to \infty} f(R_0)= 0$. Furthermore, H\"older's inequality yields
\begin{align*}
&\int_1^{R_0} |f'(r)| f(r)^{\frac{p-n}{n}}  \, \mathrm{d}r = \int_1^{R_0}  |f'(r)| r^\frac{n-1}{p} f(r)^{\frac{p-n}{n}} r^{-\frac{n-1}{p}}  \, \mathrm{d}r \\
\leq& \left( \int_1^{R_0}  |f'(r)|^p r^{n-1}  \, \mathrm{d}r  \right)^\frac{1}{p} \left(  \int_1^{R_0} f(r)^{\frac{p-n}{n}\frac{p}{p-1}} r^{\frac{-n+1}{p}\frac{p}{p-1}}  \, \mathrm{d}r \right)^\frac{p-1}{p}.
\end{align*}
The second integral can be estimated, using again H\"older's inequality as well as the normalization $\int_{1}^{R_0}  | f(r)|^p r^{n-1} \, \mathrm{d}r = 1$, as follows
\begin{align*}
\left(  \int_1^{R_0} f(r)^{\frac{p-n}{n}\frac{p}{p-1}} r^{\frac{-n+1}{p}\frac{p}{p-1}}  \, \mathrm{d}r \right)^\frac{p-1}{p} 
\leq  \left( \int_1^{R_0}   r^{-1}   \, \mathrm{d}r \right)^\frac{n-1}{n} = \ln(R_0)^\frac{n-1}{n}.
\end{align*}
Setting $y := \left( \int_1^{R_0}  |f'(r)|^p r^{n-1}  \, \mathrm{d}r  \right)^\frac{1}{p}$ and $c(R_0) :=   \frac{2^{n-1} p^n}{n^n}  \ln(R_0)^{n-1}$, we have
\begin{align*}
\Lambda_1(\alpha,p, R_0) &\geq   y^p + \alpha \left( f(R_0)^\frac{p}{n}+ \frac{p}{n} y     \ln(R_0)^\frac{n-1}{n} \right)^n 
%&\geq y^p + \alpha 2^{n-1} f(R_0)^p+ \alpha 2^{n-1} \frac{p^n}{n^n} y^n     \ln(R_0)^{n-1}\\
\geq y^p -|\alpha| c(R_0) y^n - |\alpha | 2^{n-1} f(R_0)^p.
\end{align*}
Since  $y \mapsto y^p - k y^n$ attains its minimum at $ \left( \frac{kn}{p} \right)^\frac{1}{p-n}$, we see that
\begin{align*}
\Lambda_1(\alpha,p, R_0) &\geq \left( \frac{c(R_0)|\alpha|n}{p} \right)^\frac{p}{p-n} -|\alpha|c(R_0) \left( \frac{c(R_0)|\alpha|n}{p} \right)^\frac{n}{p-n} - \frac{|\alpha| 2^{n}}{2} f(R_0)^p\\
&= |\alpha|^\frac{p}{p-n} c(R_0)^\frac{p}{p-n} \left[ \left( \frac{n}{p} \right)^\frac{p}{p-n} -  \left( \frac{n}{p} \right)^\frac{n}{p-n} \right] - \frac{|\alpha| 2^{n}}{2} f(R_0)^p.
\end{align*}
From \eqref{eq:uneqf(R_0)}, we know $f(R_0)^p = \mathcal{O}(R_0^{-n})$, and by choosing $R_0 := \frac{1}{|\alpha|^{p-n}}$, we obtain, for $\alpha \nearrow 0$, the asymptotic behavior
\begin{align*}
\frac{\lambda_1( \alpha,p,n,B_1^\mathrm{ext})}{|\alpha|^{\frac{p}{p-n}-\varepsilon}} 
%&\geq \lim_{ \alpha \nearrow 0} |\alpha|^\varepsilon \left[ 
% \left( \frac{c(R_0)n}{p} \right)^\frac{p}{p-n} - c(R_0) \left( \frac{c(R_0)n}{p} \right)^\frac{n}{p-n}  - |\alpha|^{\frac{-n}{p-n}} 2^{n-1} f(R_0)^p \right] \\
 =  \mathcal{O}\left( |\alpha|^\varepsilon \ln(|\alpha|)^{\frac{p(n-1)}{p-n}} \right) + \mathcal{O}\left( |\alpha|^\varepsilon \ln(|\alpha|)^{\frac{n^2-2n+p}{p-n}} \right) + \mathcal{O} \left( |\alpha|^{\varepsilon} \right)
\end{align*}
which proves the statement.
\end{proof}

By Remark~\ref{remark:n=1},  $\lambda_1(\alpha,p,1,B_R^\mathrm{ext})$ is independent of $R$. Consequently, $\lim_{R \to 0}\lambda_1(\alpha,p,1,B_R^\mathrm{ext}) \neq 0$. Given that the capacity of a point is strictly positive for $p>n$, see e.g. \cite[Example 2.12]{heinonen2018nonlinear}, one might be led to conjecture that $\lim_{R \to 0}\lambda_1(\alpha,p,n,B_R^\mathrm{ext}) \neq 0$ for $n \geq 2$ as well. However, as a consequence of Theorem \ref{theo:asym}, we conclude that this is not the case.

\begin{korollar}\label{coro:cap}
Suppose $\alpha < 0$. For $2 \leq n < p$, it holds $\lim_{R \to 0} \lambda_1(\alpha,p,n,B_R^\mathrm{ext}) =  0$.
\end{korollar}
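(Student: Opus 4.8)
The plan is to reduce the statement to the already-proven Theorem~\ref{theo:asym} by exploiting the scaling identity from Remark~\ref{remark:scaling}. Recall that for $\beta>0$ we have $\lambda_1(\alpha,p,n,\beta\Omega^{\text{ext}}) = \beta^{-p}\lambda_1(\beta^{p-1}\alpha,p,n,\Omega^{\text{ext}})$. Taking $\Omega=B_1$ and $\beta=R$, so that $\beta B_1^{\text{ext}} = B_R^{\text{ext}}$, this reads
\begin{align*}
\lambda_1(\alpha,p,n,B_R^{\text{ext}}) = \frac{1}{R^p}\,\lambda_1(R^{p-1}\alpha,p,n,B_1^{\text{ext}}).
\end{align*}
So the task is to understand the behaviour of the right-hand side as $R\to 0$, with $\alpha<0$ fixed; note that $R^{p-1}\alpha \nearrow 0$ as $R\to 0$, so this is exactly the regime $\alpha'\nearrow 0$ handled by Theorem~\ref{theo:asym}.

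First I would fix $\varepsilon>0$ small, to be chosen at the end (any $\varepsilon$ with $0<\varepsilon<\frac{p}{p-n}$ and such that the resulting exponent of $R$ stays positive will do). By the first limit in Theorem~\ref{theo:asym}, applied with the parameter $\alpha' = R^{p-1}\alpha$, there is a constant $C>0$ (depending on $\alpha$, $p$, $n$, $\varepsilon$) and $R_0>0$ such that for all $0<R<R_0$,
\begin{align*}
\bigl|\lambda_1(R^{p-1}\alpha,p,n,B_1^{\text{ext}})\bigr| \le C\,|R^{p-1}\alpha|^{\frac{p}{p-n}-\varepsilon} = C\,|\alpha|^{\frac{p}{p-n}-\varepsilon}\,R^{(p-1)\left(\frac{p}{p-n}-\varepsilon\right)}.
\end{align*}
Substituting into the scaling identity gives
\begin{align*}
\bigl|\lambda_1(\alpha,p,n,B_R^{\text{ext}})\bigr| \le C\,|\alpha|^{\frac{p}{p-n}-\varepsilon}\,R^{(p-1)\left(\frac{p}{p-n}-\varepsilon\right)-p}.
\end{align*}
Then I would check that the exponent of $R$ is positive for $\varepsilon$ sufficiently small: when $\varepsilon=0$ it equals $(p-1)\frac{p}{p-n}-p = \frac{p\bigl((p-1)-(p-n)\bigr)}{p-n} = \frac{p(n-1)}{p-n}$, which is strictly positive since $2\le n<p$; by continuity in $\varepsilon$ it remains positive for small $\varepsilon>0$. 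Hence the right-hand side tends to $0$ as $R\to 0$, and since $\lambda_1(\alpha,p,n,B_R^{\text{ext}})\le 0$ by Lemma~\ref{lemma:nonpositiv}, we conclude $\lambda_1(\alpha,p,n,B_R^{\text{ext}})\to 0$.

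There is essentially no main obstacle here beyond bookkeeping: the only point requiring a little care is the sign and positivity of the exponent $(p-1)\left(\frac{p}{p-n}-\varepsilon\right)-p$, which is what makes the $R^{-p}$ blow-up from scaling lose out to the vanishing rate $|\alpha'|^{p/(p-n)-\varepsilon}$ of the eigenvalue on the fixed domain. One should also note that it suffices to use the weaker first limit of Theorem~\ref{theo:asym} (the one with the $\varepsilon$); the sharper second limit is not needed. Finally, the combination with Lemma~\ref{lemma:nonpositiv} is what upgrades "the absolute value tends to $0$" to "the limit is $0$", though in fact once $|\lambda_1|\to 0$ the limit is automatically $0$ regardless of sign.
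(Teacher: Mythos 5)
Your proposal is correct and follows essentially the same route as the paper: the scaling identity of Remark~\ref{remark:scaling} reduces the claim to the $\alpha'\nearrow 0$ asymptotics of Theorem~\ref{theo:asym} on $B_1^{\text{ext}}$, and the decisive point in both arguments is that the exponent $(p-1)\bigl(\tfrac{p}{p-n}-\varepsilon\bigr)-p$ of $R$ is positive for small $\varepsilon$ because it equals $\tfrac{p(n-1)}{p-n}>0$ at $\varepsilon=0$. Your bookkeeping of that exponent is accurate, so nothing further is needed.
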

\begin{proof}
Using Remark \ref{remark:scaling}, we notice that
\begin{align*}
\lambda_1(\alpha,p,n,B_R^\mathrm{ext})  = \frac{|R^{p-1} \alpha |^{\frac{p}{p-n}-\varepsilon}}{R^p} \frac{\lambda_1( R^{p-1}\alpha,p,n,B_1^\mathrm{ext})}{|R^{p-1} \alpha |^{\frac{p}{p-n}-\varepsilon}},
\end{align*}
for any $\varepsilon > 0$. By Theorem \ref{theo:asym}, both factors vanish as $R \to 0$  if $\varepsilon < \frac{p(n-1)}{p-n}$.
\end{proof}

For bounded domains, $\alpha >0$ and $p \in (1,\infty)$, it is known that the mapping $R \mapsto \lambda_1(\alpha, p,n, B_R)$ is strictly monotonically decreasing. This result is independently shown by Bucur and Daners in \cite[Lemma 4.1]{bucur2010alternative}  and by Dai and Fu in \cite[Proposition 2.8]{dai2011faber}. To prove the monotonicity of $R \mapsto \lambda_1(\alpha, p,n, B_R^\mathrm{ext})$, we need the following lemma.

\begin{lemma}\label{lemma:dai}
Suppose $\Omega = B_R \subset \mathbb{R}^n$, $n \geq 2$, and $p \in (1, \infty)$.  For $\alpha < \alpha^*( p, n, B_R^\mathrm{ext})$, let $u: B_R^\mathrm{ext} \to \mathbb{R}$ denote the non-negative eigenfunction corresponding to $\lambda_1(\alpha, p,n, B_R^\mathrm{ext})$. As $u$ is radial, we can write $u(x) = \phi(|x|)$, where $\phi:[R, \infty) \to \mathbb{R}$. Then, $\phi$ is strictly logarithmically concave and satisfies
\begin{align*}
\frac{\phi'(R)}{\phi(R)} = -|\alpha|^\frac{1}{p-1} \quad \text{and } \quad \lim_{r \to \infty} \frac{\phi'(r)}{\phi(r)} = -\left( \frac{-\lambda_1(\alpha,p,n,B_R^\mathrm{ext})}{p-1} \right)^\frac{1}{p}. 
\end{align*} 
\end{lemma}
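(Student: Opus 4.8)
The plan is to work directly with the radial ODE satisfied by $\phi$. Since $u$ is a nonnegative eigenfunction of \eqref{eq:PDE} and radial, $\phi$ solves, for $r \in (R,\infty)$,
\begin{align*}
\bigl( r^{n-1} |\phi'(r)|^{p-2} \phi'(r) \bigr)' + \lambda_1\, r^{n-1} \phi(r)^{p-1} = 0,
\end{align*}
with boundary condition $-|\phi'(R)|^{p-2}\phi'(R) + \alpha\,\phi(R)^{p-1}=0$ at $r=R$ (recall $-\nu$ points out of the exterior domain, so $\partial_\nu u = -\phi'(R)$). Because $u \in W^{1,p}(B_R^{\text{ext}})$ and $\lambda_1<0$, one first argues that $\phi>0$ everywhere (constant sign of the first eigenfunction, already noted in the excerpt, plus the strong maximum principle to rule out interior zeros) and that $\phi'<0$ on $(R,\infty)$: integrating the ODE from $r$ to $\infty$ and using $\lambda_1<0$, $\phi>0$ shows $r^{n-1}|\phi'|^{p-2}\phi'$ is monotone, and the sign is pinned down by the boundary condition, which gives $\phi'(R)/\phi(R) = -|\alpha|^{1/(p-1)}<0$ immediately. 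This already establishes the first stated identity.

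For the logarithmic concavity, set $w := \phi'/\phi = (\ln\phi)'$; the aim is to show $w' < 0$. Dividing the ODE by $\phi^{p-1}$ and using $\phi'<0$ (so $|\phi'|^{p-2}\phi' = -|\phi'|^{p-1}$), a computation turns the equation into a first-order Riccati-type equation for $w$: writing $v := |w|^{p-1} = (-w)^{p-1}$, one gets something of the form
\begin{align*}
v'(r) = (p-1)|w|^{p} - \frac{n-1}{r}\,v(r) + \lambda_1,
\end{align*}
after differentiating $r^{n-1}\phi^{p-1}v$ and simplifying. I would then show that $w' < 0$ by a comparison/invariance argument: identify the two "equilibrium" values $-|\alpha|^{1/(p-1)}$ at $r=R$ and $-(-\lambda_1/(p-1))^{1/p}$ as $r\to\infty$ (the latter being the constant solution $w\equiv w_\infty$ of the Riccati equation with the $(n-1)/r$ term dropped, i.e. the value making $(p-1)|w_\infty|^p + \lambda_1 = 0$), and check that the region between them is forward-invariant and that $w$ is strictly decreasing there. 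Concretely, one verifies $w(R) = -|\alpha|^{1/(p-1)}$ sits above $w_\infty$ (this should follow from Theorem~\ref{theo:ineq} / Remark~\ref{remark:n=1}, which gives $\lambda_1 \geq -(p-1)|\alpha|^{p/(p-1)}$, equivalently $w_\infty \le -|\alpha|^{1/(p-1)} = w(R)$), then uses the sign of the right-hand side of the $v$-equation together with the $-(n-1)v/r$ drift to force $v$ (hence $-w$) to increase monotonically toward the limiting value, i.e. $w$ strictly decreasing — which is exactly strict log-concavity of $\phi$. The limit $\lim_{r\to\infty} w(r) = -(-\lambda_1/(p-1))^{1/p}$ then follows since a bounded monotone solution of the Riccati equation must converge to the equilibrium of the limiting autonomous equation.

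The main obstacle I expect is the monotonicity of $w$: one must rule out $w$ overshooting or oscillating, and the $r$-dependence (the $(n-1)/r$ term) prevents a purely autonomous phase-line argument, so the invariant-region reasoning has to be carried out carefully with explicit inequalities — in particular showing $w$ cannot cross $w_\infty$ and cannot have an interior critical point. A clean way to do this is to differentiate the Riccati equation once more to get an equation for $w'$ of the form $w'' = (\text{positive factor})\,w' + (\text{good sign terms})$, so that a first zero of $w'$ would be forced to be a strict local min with $w''\le 0$ there, a contradiction; this is essentially the device used by Giorgi--Smits and in \cite{dai2011faber}, which is why the lemma is attributed via that route. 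I would also double-check the boundary term sign convention at $r=R$ against \eqref{eq:PDE} to be sure the identity $\phi'(R)/\phi(R) = -|\alpha|^{1/(p-1)}$ comes out with the stated sign.
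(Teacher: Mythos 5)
Your overall architecture---pass to the logarithmic derivative, derive a first--order Riccati-type equation, differentiate once more so that a critical point has a forced sign for the second derivative, and pin down the boundary and limiting values---is exactly the paper's strategy (there the variable is $g=-\phi'/\phi>0$ rather than $w=\phi'/\phi$), and your boundary identity $\phi'(R)/\phi(R)=-|\alpha|^{1/(p-1)}$ is correct. But the monotonicity you set out to prove goes the wrong way, and the inequality you invoke to set up the invariant region is reversed. From Theorem~\ref{theo:ineq}, $\lambda_1>-(p-1)|\alpha|^{p/(p-1)}$ gives $\left(\tfrac{-\lambda_1}{p-1}\right)^{1/p}<|\alpha|^{1/(p-1)}$, hence $w_\infty=-\left(\tfrac{-\lambda_1}{p-1}\right)^{1/p}> -|\alpha|^{1/(p-1)}=w(R)$: the starting value of $w$ lies \emph{below} its limit, not above it as you claim, so $w$ cannot be strictly decreasing---on net it must increase. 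What the paper actually proves, and what is used downstream in Theorems~\ref{theo:cuteidea} and~\ref{theo:tiziana}, is that $g(r)=-\phi'(r)/\phi(r)$ decreases strictly from $|\alpha|^{1/(p-1)}$ at $r=R$ to $\left(\tfrac{-\lambda_1}{p-1}\right)^{1/p}$ at infinity, equivalently $(\ln\phi)''=-g'>0$ (the wording ``logarithmically concave'' notwithstanding; compare the explicit case $p=n=2$, where $\phi=K_0(\sqrt{-\lambda_1}\,r)$ and $K_1/K_0$ is decreasing). So your invariant-region step, as described, cannot close.

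The repair is to run your own device in the correct direction. With $g>0$ the radial equation rearranges to $\lambda_1=(p-1)g^{p-2}g'-(p-1)g^{p}+\tfrac{n-1}{r}g^{p-1}$; differentiating shows that at any critical point $r_0$ of $g$ one has $g''(r_0)=\tfrac{n-1}{(p-1)r_0^{2}}g(r_0)>0$, so every critical point is a strict local \emph{minimum} and there is at most one (your sketch says a first zero of $w'$ is a ``strict local min with $w''\le 0$,'' which is internally inconsistent). Combined with the finite positive limit $c=\left(\tfrac{-\lambda_1}{p-1}\right)^{1/p}$---which the paper obtains via L'H\^opital using $\phi,\phi'\in L^p$ and monotonicity, and which your appeal to ``bounded monotone solutions converge to the equilibrium'' still needs, since one must rule out $\lim g=0$ and $\lim g=\infty$---a local minimum would force $g(r_0)\le c$ and then $\lambda_1=-(p-1)g^{p}(r_0)+\tfrac{n-1}{r_0}g^{p-1}(r_0)>-(p-1)c^{p}=\lambda_1$, a contradiction. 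Hence $g$ has no critical point and is strictly decreasing, which is the statement actually needed.
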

\begin{proof}
Since $\phi$ is strictly positive, 
\begin{align*}
\frac{\phi(r)}{\phi(R)} = \exp\left( \ln(\phi(r)) - \ln(\phi(R)) \right)  = \exp\left( \int_{R}^r \frac{\phi'(t)}{\phi(t)} \, \mathrm{d}t \right).
\end{align*}
We define $g(r) := \frac{-\phi'(r)}{\phi(r)}=-\frac{\mathrm{d}}{\mathrm{d}r} \ln( \phi(r))$. Then, $g(r) >0$ as $\phi'(r) < 0$, and 
\begin{align}\label{eq:defg}
\phi(r) = \phi(R) \exp\left( -\int_{R}^r g(t) \, \mathrm{d}t \right).
\end{align} 
First, we establish the existence of $\lim_{r \to \infty} g(r)$. Using this property, we infer the monotonicity of $g$, which is equivalent to the logarithmic concavity. Differentiating \eqref{eq:defg} with respect to $r$ yields
\begin{align*}
\phi'(r) = -g(r) \phi(r) \quad \text{ and } \quad \phi''(r) = -g'(r) \phi(r) +g(r)^2 \phi(r).
\end{align*}
In addition, equation \eqref{eq:PDE} implies that  $\phi$ solves
\begin{align*}
(p-1) \phi''(r) ( - \phi'(r))^{p-2} - \frac{n-1}{r} (-\phi'(r))^{p-1} + \lambda_1 \phi(r)^{p-1} = 0,
\end{align*}
where  $\lambda_1 = \lambda_1(\alpha, p,n, B_R^\mathrm{ext})$.  Dividing this equation by $( - \phi'(r))^{p-2}$, leads to
\begin{align}\label{eq:lemma1}
(p-1) \phi''(r)  - \frac{n-1}{r} (-\phi'(r)) + \lambda_1 \frac{\phi(r)}{g^{p-2}(r)}  = 0.
\end{align}
%Differentiating \eqref{eq:lemma1} with respect to $r$ yields
%\begin{align}\label{eq:lemma2}
%(p-1) \phi'''(r)  - \frac{n-1}{r^2} \phi'(r) + \frac{n-1}{r} \phi''(r) + \lambda_1 \frac{\phi'(r) g(r) - \phi(r) (p-2)  g'(r) }{g^{p-1}(r)}  = 0.
%\end{align}
Rearranging \eqref{eq:lemma1} and using $\phi''(r) = -g'(r) \phi(r) +g(r)^2 \phi(r)$, we obtain
\begin{align}
\lambda_1   
%&= -(p-1) g^{p-2}(r) \frac{ \phi''(r)   }{\phi(r)} + \frac{n-1}{r} g^{p-1}(r) \nonumber \\
%&= (p-1) g^{p-2}(r) \left[g'(r) - g^2(r) \right] + \frac{n-1}{r} g^{p-1}(r)\nonumber \\
&= (p-1) g^{p-2}(r) g'(r) - (p-1) g^{p}(r)   + \frac{n-1}{r} g^{p-1}(r). \label{eq:lemma10rear}
\end{align}
Differentiating this equality with respect to $r$ and dividing by $g^{p-2}(r)(p-1)$, we obtain
%\begin{align*}
%0 =& (p-1) (p-2) g^{p-3}(r) g'^2(r)  +(p-1) g^{p-2}(r) g''(r)\\ 
%&-p (p-1)  g^{p-1}(r) g'(r)   - \frac{n-1}{r^2} g^{p-1}(r)  + \frac{n-1}{r} (p-1) g^{p-2}(r) g'(r),
%\end{align*}
%, we end up with
\begin{align*}
 g''(r) &= -(p-2)  \frac{g'(r)^2}{g(r)}  +p  g(r) g'(r)   + \frac{n-1}{(p-1)r^2} g(r)  - \frac{n-1}{r}  g'(r).
\end{align*}
Therefore, if there exists a point $r_0 \in (R, \infty)$ such that $g'(r_0) = 0$, then  
\begin{align*}
g''(r_0) = \frac{n-1}{(p-1)r_0^2} g(r_0) >0.
\end{align*}
Consequently, $g$ has a strict local minimum at $r_0$, which excludes the existence of any other critical points. This means $g$ is either monotonically decreasing on $[R, \infty)$ (if no critical point exists) or monotonically increasing on $[r_0, \infty)$. Therefore, we have either $\lim_{r \to \infty} g(r) = \infty$ or $\lim_{r \to \infty} g(r) = c \geq 0$. Given that both $\phi$ and $\phi'$ belong to $L^p([R,\infty))$ and are monotonic, we can apply  L'H\^{o}pital's rule, and using \eqref{eq:lemma1}, we see that
\begin{align*}
\lim_{r \to \infty} g(r) = \lim_{r \to \infty} \frac{- \phi''(r)}{\phi'(r)}  = \lim_{r \to \infty} \frac{\frac{n-1}{r} - \frac{\lambda_1}{g^{p-1}(r)}}{p-1}.
\end{align*}
Hence, neither $\lim_{r \to \infty} g(r) = \infty$ nor $\lim_{r \to \infty} g(r) = 0$ can occur, leaving $c = \left({\frac{- \lambda_1}{p-1}}\right)^{1/p}$. %to satisfy the equation
%\begin{align*}
%c = \frac{-\frac{\lambda_1}{c^{p-1}}}{p-1} \quad \Rightarrow \quad c = %\left({\frac{- \lambda_1}{p-1}}\right)^{1/p}.
%\end{align*}
The boundary condition on $\partial B_R$ implies $g(R)^{p-1} = - \alpha$, which is equivalent to  $g(R) = \sqrt[p-1]{- \alpha}$. Given that $g$ has no local maxima, we conclude
\begin{align*}
g(r) \leq \max \left\{ \sqrt[p]{\frac{- \lambda_1}{p-1}} , \sqrt[p-1]{- \alpha} \right\} = \sqrt[p-1]{- \alpha},
\end{align*}
where we used Lemma \ref{theo:ineq} for the second inequality. Knowing this limit, allows us to dismiss the existence of a critical point: Suppose there existed a critical point $r_0$, then $g(r_0) \leq \lim_{r \to \infty} g(r)$. From \eqref{eq:lemma10rear}, we infer
\begin{align*}
\lambda_1 &=  - (p-1) g^{p}(r_0)   + \frac{n-1}{r_0} g^{p-1}(r_0)  >  - (p-1) \left( \lim_{r \to \infty} g(r) \right)^p  = \lambda_1.
\end{align*}
Thus, there exists no critical point, and $g$ must be strictly monotonically decreasing. Furthermore, $g(r) = - \frac{\mathrm{d}}{\mathrm{d}r} \ln(\phi(r)) $. Thus, $
\frac{\mathrm{d^2}}{\mathrm{d}r^2} \ln(\phi(r)) = -g'(r) > 0$ and $\phi$ is strictly logarithmically concave.
\end{proof}

For $p=n=2$, the first eigenfunction is explicitly known, allowing to describe the asymptotic behavior of $\lambda_{1}(\alpha,2,2,B_1^{\mathrm{ext}})$. Lemma \ref{lemma:dai} and the supersolution characterization introduced in Section \ref{subsub_super} allow us to extend this to $n \geq 2$. In Theorem \ref{theo:cuteidea} we state this result for $B_1^{\mathrm{ext}}$, but note that thanks to Remark \ref{remark:scaling}, analogous bounds hold for general $B_R^{\mathrm{ext}}$.

\refstepcounter{theorem} % Erhöht den Theorem-Zähler, ohne etwas anzuzeigen
\label{theo:cuteidea}

\begin{proof}[Proof of Theorem \ref{theo:cuteidea}]
For $\alpha< \alpha^*(p,n, B_1^{\text{ext }})$, as in Lemma \ref{lemma:dai}, $\lambda_1 = \lambda_1\left(\alpha, p, n, B_1^{\text{ext }}\right)<0$ has a radial eigenfunction $\varphi(r) > 0$,  $\varphi^{\prime}(r)<0$, which solves
$$
\left\{\begin{array}{l}
\varphi^{\prime \prime}(r)+\frac{n-1}{p-1} \frac{1}{r} \varphi^{\prime}(r)+\frac{\lambda_1}{p-1}\left(\frac{\varphi(r)}{-\varphi^{\prime}(r)}\right)^{p-2} \varphi(r)=0 \quad \text{ for } r \in (1,\infty), \\
\varphi^{\prime}(1)=-|\alpha|^{\frac{1}{p-1}} \varphi(1).
\end{array}\right.
$$
For $n=p$, %we know that $\alpha^*(n,n, B_1^{\text{ext }}) =0$, then if we take $p=n$ in the power of $\alpha$, 
the differential equation becomes
$$\varphi_n^{\prime \prime}(r)+\frac{1}{r} \varphi_n^{\prime}(r)+\frac{\lambda_{1}}{n-1}\left(\frac{\varphi_n(r)}{-\varphi_n^{\prime}(r)}\right)^{n-2} \varphi_n(r)=0.$$
\noindent
If we can find a $ k<0 $ such that
$$
\begin{aligned}
 0 &=\varphi_n^{\prime \prime}(r)+\frac{1}{r} \varphi_n^{\prime}(r)+\frac{\lambda_1}{n-1}\left(\frac{\varphi_n(r)}{-\varphi_n^{\prime}(r)}\right)^{n-2} \varphi_n(r) \leq \varphi_n^{\prime \prime}(r)+\frac{1}{r} \varphi_n^{\prime}(r)+k \varphi_n(r), 
\end{aligned}
$$
the supersolution characterization of Section \ref{subsub_super},  with $n=p=2$, would give
$$
0 >k \geq \lambda_{1}(-|\alpha|^{\frac{1}{n-1}},2,2,B_1^{\mathrm{ext}})  
.$$
Note that by cancellation and the positivity of $\varphi_n(r)$, $k$ needs only to satisfy

$$0 < -k \leq \frac{-\lambda_{1}(\alpha,n,n,B_1^{\mathrm{ext}})}{(n-1)}\left(\frac{\varphi_n(r)}{-\varphi_n^{\prime}(r)}\right)^{n - 2}.$$
\noindent
By Lemma~\ref{lemma:dai}, the last expression inside the parentheses of the last factor is minimized at $r=1$ and we can pick $k$ accordingly, yielding
$$\lambda_{1}(-|\alpha|^{\frac{1}{n-1}},2,2,B_1^{\mathrm{ext}}) \leq k=\frac{\lambda_{1}(\alpha,n,n,B_1^{\mathrm{ext}})}{n-1}\left(\frac{1}{|\alpha|}\right)^{\frac{n-2}{n-1}}<0.$$
Therefore, we have
\begin{equation}\label{cuteqn1}
0>\lambda_{1}(\alpha,n,n,B_1^{\mathrm{ext}}) \geq(n-1)|\alpha|^{\frac{n-2}{n-1}} \lambda_{1}(-|\alpha|^{\frac{1}{n-1}},2,2,B_1^{\mathrm{ext}}).
\end{equation}
We can reverse the argument above. For $\alpha < 0$ there is a negative eigenvalue $\lambda_1 = \lambda_{1}(\alpha,2,2,B_1^{\mathrm{ext}})$ and a positive eigenfunction $\varphi_2(r)$ satisfying
$$
\begin{aligned}
& \left\{\begin{array}{l}
\varphi_2^{\prime \prime}+\frac{1}{r} \varphi_2^{\prime}(r)+\lambda_{1} \varphi_2(r) = 0 \quad \text{ for } r \in (1,\infty)  \\
\varphi_2^{\prime}(1)=\alpha \varphi_2(1).
\end{array}\right. 
\end{aligned}
$$
If there exists $ k<0 $ with   
$$
 0=\varphi_2^{\prime \prime}(r)+\frac{1}{r} \varphi_2^{\prime}(r)+\lambda_{1} \varphi_2(r)
\leq \varphi_2^{\prime \prime}(r)+\frac{1}{r} \varphi_2^{\prime}(r)+\frac{k}{n-1}\left(\frac{\varphi_2(r)}{-\varphi_2(r)}\right)^{n-2} \varphi_2(r),
$$
then the supersolution characterization, now with $n=p$ guarantees

$$
0>k \geq  \lambda_{1}(-|\alpha|^{n-1},n,n,B_1^{\mathrm{ext}}).
$$
Again using Lemma~\ref{lemma:dai}, but minimizing as $r \rightarrow \infty$, one can set
$$
\begin{aligned}
-k & =-\lambda_{1}(n-1)  \inf_{r \in [1,\infty)} \left(g_2(r)^{n-2}\right)=-\lambda_{1} (n-1) \left(\frac{-\lambda_{1}}{2-1}\right)^{\frac{n-2}{2}}.  
\end{aligned}
$$
So that 
$$
0> -(n-1) (-\lambda_{1}(\alpha,2,2,B_1^{\mathrm{ext}}))^\frac{n}{2}\geq  \lambda_{1}(-|\alpha|^{n-1},n,n,B_1^{\mathrm{ext}}),
$$ and rescaling gives
$$
0> -(n-1) (-\lambda_{1}(-|\alpha|^{\frac{1}{n-1}},2,2,B_1^{\mathrm{ext}}))^\frac{n}{2}\geq  \lambda_{1}(\alpha,n,n,B_1^{\mathrm{ext}}).
$$ 
Combining this inequality with (\ref{cuteqn1}), we obtain our claim.
\end{proof}

Lemma \ref{lemma:dai}, allows us to follow the proof of \cite[Theorem 1]{giorgi2005monotonicity}, yielding the corollary below. In particular, Corollary \ref{theo:tiziana} implies the monotonicity of $R \mapsto \lambda_1(\alpha,p,n,B_R^\mathrm{ext})$.  Monotonicity with respect to domain inclusion does not hold in general, even for convex domains, \cite[Remark 1]{bundrock2024optimizing}.

\begin{corollary}\label{theo:tiziana}
Let $ p \in (1, \infty)$, $n \geq 2$, and let $\Omega \subset \mathbb{R}^n$ be a Lipschitz domain with $B_r \subseteq \Omega$. For $n > p$ and  $\alpha < \alpha^*(p,n, B_r^\mathrm{ext})$ or $n \leq p$ and $\alpha <0$, it holds
\begin{align*}
\lambda_1(\alpha, p,n, \Omega^{\mathrm{ext}}) \leq \lambda_1(\alpha, p,n,B_r^{\mathrm{ext}}).
\end{align*}
\end{corollary}

\subsection{Geometric Inequalities}\label{sec:pacwoman}

For $\alpha <0$, Krej{\v{c}}i{\v{r}}{\'\i}k and Lotoreichik prove that the exterior of a ball maximizes $\lambda_1(\alpha, 2,2,\Omega^\mathrm{ext})$ among all smooth bounded domains $\Omega \subset \mathbb{R}^2$ with given perimeter or given area, \cite{krejcirik2016optimisation,krejvcivrik2020optimisation}.  In higher dimensions, the exterior of a ball no longer maximizes $\lambda_1(\alpha, 2,n,\Omega^\mathrm{ext})$, \cite[Section 5.3]{krejcirik2016optimisation}, \cite[Section 3.5]{bundrock2023robin}, \cite[Section 2.4]{bundrock2024optimizing}.
%
%However, the exterior of a ball maximizes  $\lambda(\alpha,2,n,\Omega^\mathrm{ext})$ locally among the exterior of smooth domains with given measure in any dimension $n \geq 3$, as shown in \cite[Section 3]{bundrock2023robin} and \cite[Section 3]{bundrock2024optimizing}. Here, locally refers to small perturbations, as described by Bandle and Wagner in \cite{bandle2023shape}.
%
%
%
%
%In \cite[Theorem 1]{krejcirik2016optimisation}, Krejcirik and Lotoreichik show that for $\alpha < 0$, $n=p=2$, the disc maximizes $\lambda_1(\alpha,2,2,\Omega^\mathrm{ext})$ within the class of all convex, smooth, bounded sets $\Omega \subset \mathbb{R}^2$ whether prescribed by perimeter or area. Furthermore, in \cite{krejvcivrik2020optimisation}, they extend this finding, relaxing the necessity for convexity to simply connected sets.
%
Based on the results from Section \ref{sec:existence} and \ref{sec:ball}, we can apply the same arguments as in \cite{krejvcivrik2020optimisation}, and obtain the following result for the $p$-Laplacian in two dimensions.
 
%\begin{theorem}\label{theo:5}
%For $p \in (2, \infty)$ and $\alpha < 0$, or $p \in (1,2)$ and $\alpha < \alpha^*(p,2,B_R^\mathrm{ext})$, 
%\begin{align*}
%\max \lambda_1(\alpha, p,2, \Omega^\mathrm{ext}) = \lambda_1(\alpha, p,2, B_R^\mathrm{ext}) ,
%\end{align*}
%where the maximum is taken over all smooth bounded, open sets $\Omega \subset \mathbb{R}^2$, consisting of finitely many disjoint simply connected components  such that $\frac{|\partial \Omega|}{N_\Omega} = |\partial B_R|$, and $N_\Omega$ denotes the number of connected components of $\Omega$.
%\end{theorem}
 \begin{theorem}\label{theo:5}
For $n=2$, $p \in (2, \infty)$, $\alpha < 0$ or $p \in (1,2)$, $\alpha < \alpha^*(p, 2,B_R^\mathrm{ext})$,
\begin{align*}
\max \lambda_1(\alpha, p,2, \Omega^\mathrm{ext}) = \lambda_1(\alpha, p,2, B_R^\mathrm{ext}) ,
\end{align*}
where the maximum is taken over all smooth bounded, simply connected, open sets $\Omega \subset \mathbb{R}^2$,  such that $|\Omega| = | B_R|$ or $|\partial \Omega| = |\partial B_R|$.
\end{theorem}

%Using Theorem \ref{theo:tiziana} and the classic isoperimetric inequality, Theorem \ref{theo:5} yields that the ball maximizes $\lambda_1(\alpha,p,2,\Omega^\mathrm{ext})$ among sets with prescribed area as well.
%\begin{korollar}
%For $n=2$, $p \in (2, \infty)$, $\alpha < 0$ or $p \in (1,2)$, $\alpha < \alpha^*(p, 2,B_R^\mathrm{ext})$,
%\begin{align*}
%\max \lambda_1(\alpha, p,2, \Omega^\mathrm{ext}) = \lambda_1(\alpha, p,2, B_R^\mathrm{ext}) ,
%\end{align*}
%where the maximum is taken over all smooth bounded, simply connected, open sets $\Omega \subset \mathbb{R}^2$,  such that $|\Omega| = | B_R|$.
%\end{korollar}
%\begin{proof}
%According to Theorem \ref{theo:5}, any domain $\Omega$ with $| \partial \Omega| = |\partial B_{R_1}|$ satisfies $\lambda_1(\alpha, p,2, \Omega^\mathrm{ext}) < \lambda_1(\alpha, p,2, B_{R_1}^\mathrm{ext})$. Let $B_{R_2}$ be the ball such that $| \Omega| = |B_{R_2}|$. By the classic isoperimetric inequality, $R_2 \leq R_1$. Consequently, Theorem \ref{theo:tiziana} implies $\lambda_1(\alpha, p,2, B_{R_1}^\mathrm{ext}) < \lambda_1(\alpha, p,2, B_{R_2}^\mathrm{ext})$, leading to 
%\begin{align*}
%\lambda_1(\alpha, p,2, \Omega^\mathrm{ext}) <\lambda_1(\alpha, p,2, B_{R_1}^\mathrm{ext}) < \lambda_1(\alpha, p,2, B_{R_2}^\mathrm{ext}),
%\end{align*}
%meaning that the ball is the maximizer among domains with given area. 
%\end{proof}

To show that this cannot be extended to higher dimensions, we consider the following example.
\begin{example}\label{example:2}
Suppose $n \geq 3$ and $p \in (1, \infty)$. For $a \in (0,1)$, we define
\begin{align*}
E(a):= \left\{ x \in \mathbb{R}^n: (a x_1)^2 + \sum_{k=2}^n x_k^2 <1 \right\}.
\end{align*}
Then, for sufficiently small $a$, we have
\begin{align*}
 H_\text{max}(E(a)^\mathrm{ext}) &= - \frac{n-2+a^2}{n-1} < 
 H_\text{max}\left(B^\mathrm{ext}\right),
\end{align*}
where $H(\Omega^\mathrm{ext}) = -H(\Omega) $ and $H_\text{max}(\Omega^\mathrm{ext})$ is the maximal curvature of $\partial \Omega^\mathrm{ext}$ and $B$ is the ball with $|B| = |E(a)|$. Kova{\v{r}}{\'\i}k and Pankrashkin show the asymptotic behavior of $\lambda_1$ as $\alpha \to - \infty$, \cite[Theorem 1.1]{kovavrik2017p}. For our problem, this reads as
\begin{align}\label{eq:asym}
\lambda_1(\alpha,p,n,\Omega^\mathrm{ext}) = -(p-1) | \alpha|^\frac{p}{p-1} - (n-1) H_\text{max} ( \Omega^\mathrm{ext}) |\alpha| + o(\alpha).
\end{align}
Thus, for sufficiently negative $\alpha$, we have $\lambda_1(\alpha,p,n,E(a)^\mathrm{ext}) >\lambda_1 \left( \alpha,p,n,B^\mathrm{ext} \right)$.
\end{example}

\refstepcounter{theorem} % Erhöht den Theorem-Zähler, ohne etwas anzuzeigen
\label{prop:bound}
 
%\begin{proposition}\label{prop:bound}
%For $\alpha < 0$, it holds
%\begin{align*}
%\inf \lambda_1(\alpha,2,2,\Omega^\mathrm{ext}) \geq -|\alpha|^2,
%\end{align*}
%where the infimum is taken over all convex, bounded domains $\Omega \subset \mathbb{R}^2$.
%\end{proposition}
Finally, we discuss lower bounds for $\lambda_1$ in Theorem \ref{prop:bound}.
\begin{proof}[Proof of Theorem \ref{prop:bound}]
%For the first part, we construct a sequence $\left( \Omega_m \right)_{m \in  \mathbb{N}} \subset \mathbb{R}^2$ with $|\partial \Omega_m|$, $|\Omega_m| < C$ and $\lim_{m \to \infty} \lambda_1(\alpha,p,2,\Omega_m^\mathrm{ext}) = - \infty$. Then, by the scaling property, see Remark \ref{remark:scaling}, the claimed statement follows.
For the first part, we construct a sequence $\Omega_m$ that tends to a non-Lipschitz domain, see Figure \ref{fig:pacman}. For $m \in \mathbb{N}$, we define
\begin{align*}
\Omega_m := B_1(0) \setminus \{ x \in \mathbb{R}^2: x_1 \geq m^{-1} \land  |x_2| \leq x_1^{p+3} \}.
\end{align*}
Let $u: \Omega_m^\mathrm{ext} \to \mathbb{R}$ be given by $u(x) := |x|^{-\frac{3}{p}}$. Then,
%\begin{align*}
%\int_{\Omega_m^\mathrm{ext}} |u|^p \, \mathrm{d}x &< \int_{{0}}^1 %\int_{-x_1^{p+3}}^{x_1^{p+3}} |x|^{-3} \, \mathrm{d}x_2 \, %\mathrm{d}x_1 + \int_{B_1^\mathrm{ext}} |x_1|^{-3} \, \mathrm{d}x = %\frac{1}{p+1} + |\partial B_1|,
%\end{align*}
%and 
\begin{align*}
\int_{\Omega_m^\mathrm{ext}} |u|^p \, \mathrm{d}x \leq \frac{1}{p+1} + |\partial B_1| \qquad \text{ and } \qquad \int_{\Omega_m^\mathrm{ext}} |\nabla u|^p \, \mathrm{d}x &\leq 2\frac{3^p}{p^p} + \frac{3^p}{p^p} \frac{|\partial B_1|}{p+1}. 
\end{align*}
For the boundary integral, define  $\gamma_1: [{\frac{1}{m}},1] \to \mathbb{R}^2, \, t \mapsto \begin{pmatrix}
\frac{t}{\sqrt{2}} \\ \frac{t^{p+3}}{\sqrt{2}}
\end{pmatrix}$. Then,
\begin{align*}
\int_{\partial \Omega_m} u^2 \, \mathrm{d}S &>  2 \int_{{\frac{1}{m}}}^1 u^p(\gamma_1(t)) | \dot{\gamma_1}(t) | \, \mathrm{d} t  > 2 \int_{{\frac{1}{m}}}^1 \frac{1}{\sqrt{2} t^6}  \, \mathrm{d}t.
\end{align*}
Hence, $\lim_{m \to \infty} \int_{\partial \Omega_m} u^p \, \mathrm{d}S = \infty$. Thus, $\lim_{m \to \infty} \lambda_1(\alpha,p,2,\Omega_m^\mathrm{ext}) = - \infty$.

For the second part, note that any convex, bounded set has a Lipschitz boundary, \cite[Lemma 2.3]{dekel2004whitney}. Thus, $\lambda_1$ is well defined.  Since $\alpha < 0$, $p=n$, it holds $\lambda_1(\alpha,2,2,\Omega^\mathrm{ext}) < 0$. Thus, there exists an eigenfunction $u \in W^{1,2}(\Omega^\mathrm{ext})$, corresponding to $\lambda_1(\alpha,2,2,\Omega^\mathrm{ext})$.

As in \cite[Section 3]{kovavrik2017p}, we parameterize $\Omega^\mathrm{ext}$ using parallel coordinates,
\begin{align*}
\Phi: \partial \Omega \times (0, \infty) \to \Omega^\mathrm{ext}, \quad \Phi(s,t) := s + t \nu(s).
\end{align*}
We define $\phi(s,t) := 1 - \kappa(s) t$, where $-\kappa(s)$ is the  curvature of $\partial \Omega$. The convexity of $\Omega$ implies $\kappa(s) \leq 0$. Then, 
\begin{align*}
\int_{\Omega^\mathrm{ext}} u(x)^2 \, \mathrm{d}x = \int_{\partial \Omega \times (0, \infty)}  u \left( \Phi(s,t) \right)^2 \phi(s,t) \, \mathrm{d}S_s \, \mathrm{d}t.
\end{align*}
Additionally, we define $g := u \circ \Phi$, and have
\begin{align*}
 \left| (\nabla u)\circ \Phi(s,t) \right|^2   = \frac{|\partial_s g(s,t)|^2}{(1- \kappa(s) t)^2} + |\partial_t g(s,t) |^2.
\end{align*}
Therefore, $\lambda_1(\alpha,2, 2,\Omega^\mathrm{ext})$ equals
\begin{align*}
 &  \frac{\int_{\partial \Omega \times (0, \infty)} \left[ \frac{|\partial_s g(s,t)|^2}{(1- \kappa(s) t)^2} + |\partial_t g(s,t) |^2 \right] \phi(s,t) \, \mathrm{d}S_s \, \mathrm{d}t + \alpha \int_{\partial \Omega} | g(s,0) |^2 \, \mathrm{d}S_s }{\int_{\partial \Omega \times (0, \infty)} |g(s,t)|^2 (1- \kappa(s) t) \, \mathrm{d}S_s \, \mathrm{d}t} \\
 \geq&  \frac{\int_{\partial \Omega \times (0, \infty)}  |\partial_t g(s,t) |^2(1- \kappa(s) t) \, \mathrm{d}S_s \, \mathrm{d}t + \alpha \int_{\partial \Omega} | g(s,0) |^2 \, \mathrm{d}S_s }{\int_{\partial \Omega \times (0, \infty)} |g(s,t)|^2 (1- \kappa(s) t) \, \mathrm{d}S_s \, \mathrm{d}t}.
\end{align*}
To obtain a lower bound for $\lambda_1(\alpha,2,2, \Omega^\mathrm{ext})$, we define
\begin{align*}
\Lambda_1^\alpha(s) := \inf_{h \in W^{1,p}( (0, \infty))} \frac{\int_{0}^\infty |h'(t)|^2(1- \kappa(s) t) \, \mathrm{d}t + \alpha  | h(0) |^2  }{\int_{0}^\infty |h(t)|^2 (1- \kappa(s) t) \, \mathrm{d}t}.
\end{align*}
Any $h \in W^{1,p}( (0, \infty))$ satisfies
\begin{align*}
\int_{0}^\infty |h'(t)|^2(1- \kappa(s) t) \, \mathrm{d}t + \alpha  | h(s,0) |^2   \geq \Lambda_1^\alpha(s)\int_{0}^\infty |h(t)|^2 (1- \kappa(s) t) \, \mathrm{d}t,
\end{align*}
which implies
\begin{align*}
\lambda_1(\alpha,2, 2,\Omega^\mathrm{ext})  \geq \frac{\int_{\partial \Omega} \left[ \Lambda_1^\alpha(s)\int_{0}^\infty |g(s,t)|^2 (1- \kappa(s) t) \, \mathrm{d}t \right] \, \mathrm{d}S_s }{\int_{\partial \Omega} \int_0^\infty |g(s,t)| ^2 (1- \kappa(s) t) \, \mathrm{d}t \, \mathrm{d}S_s } \geq \inf_{s \in \partial \Omega} \Lambda_1^\alpha(s).
\end{align*}
For a given $s \in \partial \Omega$, we can see $\Lambda_1^\alpha(s)$  as the first eigenvalue of
\begin{align*}
\begin{cases}
\left( u'(t) (1- \kappa(s)t) \right)'  + \Lambda^\alpha(s) u(t) (1- \kappa(s)t) = 0 \quad &\text{ in } (0, \infty),\\
u'(0) - \alpha u(0) = 0 \quad \text{ and } \quad  \lim_{x \to \infty} u(x) =0.
\end{cases}
\end{align*}
If $\kappa(s) = 0$, we have $\Lambda_1^\alpha(s) = -\alpha^2$.
%the solutions are given by
%\begin{align*}
%u(t) &=c_1 \exp\left( - \sqrt{-\Lambda_1^\alpha(s)} t \right) + c_2 \exp\left(  \sqrt{-\Lambda_1^\alpha(s)} t \right), \quad c_1, c_2 \in \mathbb{R}.
%\end{align*}
%To ensure that $u$ decays, we choose $c_2= 0$ and the boundary condition yields
%\begin{align*}
%\alpha = \frac{u'(0)}{u(0)} = - \sqrt{-\Lambda_1^\alpha(s)}.
%\end{align*}
For $\kappa(s) < 0$, the solutions, which decay at infinity, are given by
\begin{align*}
u(t) = c K_0 \left( \frac{\sqrt{- \Lambda_1^\alpha(s)} (1- \kappa(s) t)}{- \kappa(s)} \right), \quad c \in \mathbb{R}.
\end{align*}
Hence, the boundary condition yields
\begin{align*}
\alpha = \frac{u'(0)}{u(0)}  = -\sqrt{-\Lambda_1^\alpha(s)} \frac{K_1 \left( \frac{\sqrt{- \Lambda_1^\alpha(s)} }{- \kappa(s)} \right)}{K_0 \left( \frac{\sqrt{- \Lambda_1^\alpha(s)} }{- \kappa(s)} \right)}.
\end{align*}
Using \cite[Theorem 5]{segura2011bounds}, this implies $ \Lambda_1^\alpha(s) \geq -\alpha^2$, which completes the proof.\end{proof}

\begin{appendix}
\section{Appendix}
\begin{lemma}\label{lemma:basicineq}
Suppose $a,b,c,d \in \mathbb{R}$, $a,b,c,d\geq 0$ and $c,d \neq 0$. Then,
\begin{enumerate}[(i)]
\item $2^{2-p} \frac{( a + b)^p}{c^p + d^p} \leq \frac{a^p}{c^p} + \frac{b^p}{d^p}$ for any $p \geq 2$.\label{eq:algineq}
\item $ \frac{( a + b)^p}{c^p + d^p} \leq \frac{a^p}{c^p} + \frac{b^p}{d^p}$ for any $1 \leq p \leq 2$.\label{eq:algineqimp}
\item $ ( a + b)^p \leq (1+\varepsilon)^{p-1}a^p + (1+\frac{1}{\varepsilon})^{p-1}b^p$ for any $p \geq 1$, $\varepsilon>0$. \label{eq:algineqcd1}
\item $(a+b)^p \leq b^p +  p \left( a^p + a b^{p-1} \right)$ for any $1 \leq p \leq 2$. \label{eq:ungl2}
\item $(a+b)^p \leq b^p + 2^{p-2} p \left( a^p + a b^{p-1} \right)$ for any $p \geq 2$. \label{eq:ungl1}
\end{enumerate}
\end{lemma}
\begin{proof}
The inequalities presented here can all be reduced to properties of a one-dimensional function.
\begin{enumerate}[(i)]
\item The claimed inequality is equivalent to
\begin{align}\label{eq:lemma11}
2^{2-p} (a+b)^p \leq a^p (1+x)+b^p\frac{x+1}{x},
\end{align}
where $x:=\frac{d^p}{c^p}\in (0, \infty)$. The right hand side attains its minimum at $x^* = \sqrt{\frac{b^p}{a^p}}$ and equals $\left( a^\frac{p}{2} + b^\frac{p}{2} \right)^2$. For a given $q \geq 1 $,  the function
\begin{align*}
f:[0, \infty) \to \mathbb{R}, \quad f(y) := 1+ y^q - 2^{1-q} \left( 1+y \right)^q
\end{align*}
is nonnegative. With $q=\frac{p}{2}$, $y = \frac{a}{b}$, this yields $0 < 1+\left(\frac{a}{b}\right)^\frac{p}{2}-2^{1-\frac{p}{2}} \left( 1+\frac{a}{b} \right)^\frac{p}{2}$, which implies \eqref{eq:lemma11}.
\item For a given $\frac{1}{2} \leq q<1$, the function $f:[0, \infty) \to \mathbb{R}$, $f(y) := 1+ y^q -  \left( 1+y \right)^q$ is nonnegative. As before, with $q=\frac{p}{2}$ and $y=\frac{a}{b}$, this yields (ii).
\item For $a \neq 0$, Lemma \ref{lemma:basicineq}\eqref{eq:algineqcd1} is equivalent to
\begin{align*}
 \left( 1 + \frac{b}{a} \right)^p \leq (1+\varepsilon)^{p-1} + \left( 1+\frac{1}{\varepsilon}\right)^{p-1}\left( \frac{b}{a} \right)^p.
\end{align*}
Using the convexity of $f:[0,\infty) \to \mathbb{R}$, $f(x)=x^p$ for $p>1$, we have 
\begin{align*}
    \left( \frac{1}{1+\varepsilon} + \frac{\varepsilon}{1+\varepsilon}x \right)^p \leq \frac{1}{1+\varepsilon} + \frac{\varepsilon}{1+\varepsilon}x^p.
\end{align*}
Multiplying with $(1+\varepsilon)^p$, 
\begin{align*}
    \left(1 + \varepsilon x \right)^p \leq (1+ \varepsilon)^{p-1} + \left(\frac{1+ \varepsilon}{\varepsilon}\right)^{p-1} (\varepsilon x)^p.
\end{align*}
With $x=\frac{b}{\varepsilon a}$, this yields the inequality stated above.
\item The function $f: [0,\infty) \to \R$, $f(x) := 1 +  p  x^p +px  - \left( 1+x \right)^p$ is increasing on $[0,\infty)$. Thus, $f(x) \geq f(0) = 0$, i,plying $
0 \leq 1 +  p  \left( \frac{a}{b} \right)^p +p \frac{a}{b}   - \left( \frac{a}{b} +1 \right)^p$.
\item Using the convexity of $ t \mapsto |t|^p$, Lindqvist has shown in \cite[p. 74]{lindqvist2017notes} that $|y|^p \geq |x|^p + p  |x|^{p-2} x ( y-x )$ for any $x,y \in \mathbb{R}^d$ and any $p \geq 1$. This inequality in one dimension with $y=-b$ and $x=a+b$ together with  Lemma \ref{lemma:basicineq}\eqref{eq:algineqcd1} with $\varepsilon=1$ yield the claimed inequality.
\end{enumerate}\end{proof}
\end{appendix}

\section*{Acknowledgments}
The work of the second author was partially supported by the National Science Foundation, USA, through the award DMS-2345500. 
%\newpage

%% If you have bib database file and want bibtex to generate the
%% bibitems, please use
%%
%%  \bibliographystyle{elsarticle-num-names} 
%%  \bibliography{<your bibdatabase>}
%\bibliographystyle{elsarticle-harv} 
\bibliographystyle{alpha}
\bibliography{sn-bibliography.bib}

\begin{thebibliography}{HKM18}

\bibitem[AC13]{anello2013asymptotic}
Giovanni Anello and Giuseppe Cordaro.
\newblock On the asymptotic behavior of certain solutions of the {Dirichlet}
  problem for the equation {$- \Delta_p u= \lambda | u|^{q-2} u$}.
\newblock {\em Monatsh. Math.}, 172:127--149, 2013.

\bibitem[AH14]{auchmuty2014p}
Giles Auchmuty and Qi~Han.
\newblock $p$-{L}aplacian boundary value problems on exterior regions.
\newblock {\em Journal of Mathematical Analysis and Applications},
  417(1):260--271, 2014.

\bibitem[AM74]{astarita1974principles}
Giovanni Astarita and Giuseppe Marrucci.
\newblock Principles of non-newtonian fluid mechanics.
\newblock {\em McGraw-Hill}, 1974.

\bibitem[Ane09]{anello2009dirichlet}
Giovanni Anello.
\newblock On the {Dirichlet} problem involving the equation {$-\Delta_p u=
  \lambda u^{s- 1}$}.
\newblock {\em Nonlinear Anal.}, 70(5):2060--2066, 2009.

\bibitem[BD10]{bucur2010alternative}
Dorin Bucur and Daniel Daners.
\newblock An alternative approach to the {F}aber--{K}rahn inequality for
  {R}obin problems.
\newblock {\em Calc. Var. Partial Differ.}, 37(1):75--86, 2010.

\bibitem[BDP24]{barbato2024upper}
Rosa Barbato and Francesco Della~Pietra.
\newblock Upper and lower bounds for the first {R}obin eigenvalue of nonlinear
  elliptic operators.
\newblock {\em arXiv preprint arXiv:2410.12565}, 2024.

\bibitem[Bun23]{bundrock2023robin}
Lukas Bundrock.
\newblock {\em The {R}obin eigenvalue in exterior domains}.
\newblock PhD thesis, RWTH Aachen University, 2023.

\bibitem[Bun24]{bundrock2024optimizing}
Lukas Bundrock.
\newblock Optimizing the first {R}obin eigenvalue in exterior domains: the
  ball’s local maximizing property.
\newblock {\em Ann. Mat. Pura Appl.}, pages 1--23, 2024.

\bibitem[DF11]{dai2011faber}
Qiu-Yi Dai and Yu-Xia Fu.
\newblock {F}aber-{K}rahn inequality for {R}obin problems involving
  p-{L}aplacian.
\newblock {\em Acta Math. Appl. Sin. Engl. Ser.}, 27(1):13--28, 2011.

\bibitem[DL04]{dekel2004whitney}
Shai Dekel and Dany Leviatan.
\newblock Whitney estimates for convex domains with applications to
  multivariate piecewise polynomial approximation.
\newblock {\em Found. Comput. Math.}, 4:345--368, 2004.

\bibitem[GS05]{giorgi2005monotonicity}
Tiziana Giorgi and Robert~G Smits.
\newblock Monotonicity results for the principal eigenvalue of the generalized
  {R}obin problem.
\newblock {\em Ill. J. Math.}, 49(4):1133--1143, 2005.

\bibitem[GS07]{giorgi2007eigenvalue}
Tiziana Giorgi and Robert Smits.
\newblock Eigenvalue estimates and critical temperature in zero fields for
  enhanced surface superconductivity.
\newblock {\em Z. Angew. Math. Phys.}, 58(2):224--245, 2007.

\bibitem[Han16]{han2016first}
Qi~Han.
\newblock On the first exterior p-harmonic {S}teklov eigenvalue.
\newblock {\em J. Math. Anal. Appl.}, 434(2):1182--1193, 2016.

\bibitem[HKM18]{heinonen2018nonlinear}
Juha Heinonen, Tero Kipelainen, and Olli Martio.
\newblock {\em Nonlinear potential theory of degenerate elliptic equations}.
\newblock Dover Publications, 2018.

\bibitem[KL16]{krejcirik2016optimisation}
David Krejcirik and Vladimir Lotoreichik.
\newblock Optimisation of the lowest {R}obin eigenvalue in the exterior of a
  compact set.
\newblock {\em J. Convex Anal.}, 25:319--337, 2016.

\bibitem[KL20]{krejvcivrik2020optimisation}
David Krej{\v{c}}i{\v{r}}{\'\i}k and Vladimir Lotoreichik.
\newblock Optimisation of the lowest {R}obin eigenvalue in the exterior of a
  compact set, ii: non-convex domains and higher dimensions.
\newblock {\em Potential Anal.}, 52:601--614, 2020.

\bibitem[KL24]{krejcirik2023optimisation}
David Krej{\v{c}}i{\v{r}}{\'\i}k and Vladimir Lotoreichik.
\newblock Optimisation and monotonicity of the second robin eigenvalue on a
  planar exterior domain.
\newblock {\em Calc. Var. Partial Differ. Equ.}, 63(9):223, 2024.

\bibitem[KLS25]{kachmar2025laplace}
Ayman Kachmar, Vladimir Lotoreichik, and Mikael Sundqvist.
\newblock On the {L}aplace operator with a weak magnetic field in exterior
  domains.
\newblock {\em Anal. Math. Phys.}, 15(1):1--33, 2025.

\bibitem[KP17]{kovavrik2017p}
Hynek Kova{\v{r}}{\'\i}k and Konstantin Pankrashkin.
\newblock On the p-{L}aplacian with {R}obin boundary conditions and boundary
  trace theorems.
\newblock {\em Calc. Var. Partial Differ. Equ.}, 56:1--29, 2017.

\bibitem[L{\^e}06]{le2006eigenvalue}
An~L{\^e}.
\newblock Eigenvalue problems for the p-{L}aplacian.
\newblock {\em Nonlinear Anal.}, 64(5):1057--1099, 2006.

\bibitem[Lin90]{lindqvist1990equation}
Peter Lindqvist.
\newblock On the equation $\operatorname{div}(|\nabla u|^{p-2}\nabla u) +
  \lambda|u| ^{p-2}u = 0$.
\newblock In {\em Proc. Am. Math. Soc.}, volume 109, pages 157--164, 1990.

\bibitem[Lin17]{lindqvist2017notes}
Peter Lindqvist.
\newblock {\em Notes on the p-{L}aplace equation}.
\newblock Number 161. University of Jyv{\"a}skyl{\"a}, 2017.

\bibitem[LL01]{lieb2001analysis}
Elliott~H Lieb and Michael Loss.
\newblock {\em Analysis}, volume~14.
\newblock Am. Math. Soc., 2001.

\bibitem[LO04]{LuOu}
Guozhen Lu and Biao Ou.
\newblock A {P}oincar{\'e} inequality on {$\mathbb{R}^n$} and its application
  to potential fluid flows in space.
\newblock {\em IMA Preprint Series 1953}, 2004.

\bibitem[Seg11]{segura2011bounds}
Javier Segura.
\newblock Bounds for ratios of modified {Bessel} functions and associated
  {T}ur{\'a}n-type inequalities.
\newblock {\em J. Math. Anal. Appl.}, 374(2):516--528, 2011.

\bibitem[Spe72]{sperb1972untere}
Ren{\'e}~P Sperb.
\newblock Untere und obere schranken f{\"u}r den tiefsten eigenwert der
  elastisch gest{\"u}tzten membran.
\newblock {\em Z. Angew. Math. Phys.}, 23:231--244, 1972.

\bibitem[Tal76]{talenti1976best}
Giorgio Talenti.
\newblock Best constant in {S}obolev inequality.
\newblock {\em Ann. Mat. Pura. Appl.}, 110:353--372, 1976.

\bibitem[Zie89]{ziemer2012weakly}
William~P Ziemer.
\newblock {\em Weakly differentiable functions: Sobolev spaces and functions of
  bounded variation}, volume 120.
\newblock Springer Verlag, New York, 1989.

\end{thebibliography}
%% else use the following coding to input the bibitems directly in the
%% TeX file.

%% Refer following link for more details about bibliography and citations.
%% https://en.wikibooks.org/wiki/LaTeX/Bibliography_Management

%\begin{thebibliography}{00}
%
%%% For authoryear reference style
%%% \bibitem[Author(year)]{label}
%%% Text of bibliographic item
%
%\bibitem[Lamport(1994)]{lamport94}
%  Leslie Lamport,
%  \textit{\LaTeX: a document preparation system},
%  Addison Wesley, Massachusetts,
%  2nd edition,
%  1994.
%
%\end{thebibliography}
\end{document}